\DeclareMathOperator{\Ima}{Im}
\theoremstyle{plain}
\newtheorem{theorem}{Theorem}[section]
\newtheorem{corollary}[theorem]{Corollary}
\newtheorem{proposition}[theorem]{Proposition}
\newtheorem{lemma}[theorem]{Lemma}
\theoremstyle{definition}
\newtheorem{definition}[theorem]{Definition}
\theoremstyle{remark}
\newtheorem{remark}[theorem]{Remark}
\tikzset{
  pics/torus/.style n args={3}{
    code = {
      \providecolor{pgffillcolor}{rgb}{1,1,1}
      \begin{scope}[
          yscale=cos(#3),
          outer torus/.style = {draw,line width/.expanded={\the\dimexpr2\pgflinewidth+#2*2},line join=round},
          inner torus/.style = {draw=pgffillcolor,line width={#2*2}}
        ]
        \draw[outer torus] circle(#1);\draw[inner torus] circle(#1);
        \draw[outer torus] (180:#1) arc (180:360:#1);\draw[inner torus,line cap=round] (180:#1) arc (180:360:#1);
      \end{scope}
    }
  }
}
\begin{document}

\begin{abstract}
We define the Bohr-Sommerfeld quantization via $T$-modules for a $b$-symplectic toric manifold and show that it coincides with the formal geometric quantization of \cite{GuilleminMirandaWeitsman18}. In particular, we prove that its dimension is given by a signed count of the integral points in the moment polytope of the toric action on the manifold.
\end{abstract}

\author{Pau Mir}
\address{Pau Mir,
Laboratory of Geometry and Dynamical Systems, Universitat Polit\`{e}cnica de Catalunya, Avinguda del Doctor Mara\~{n}on 44-50, 08028, Barcelona }
\email {pau.mir.garcia@upc.edu}
\author{Eva Miranda}
\address{Eva Miranda,
Laboratory of Geometry and Dynamical Systems $\&$ Institut de Matem\`atiques de la UPC-BarcelonaTech (IMTech), Universitat Polit\`{e}cnica de Catalunya, Avinguda del Doctor Mara\~{n}on 44-50, 08028, Barcelona \\ CRM Centre de Recerca Matem\`{a}tica, Campus de Bellaterra
Edifici C, 08193 Bellaterra, Barcelona }
\email{eva.miranda@upc.edu}
\author{Jonathan Weitsman}
\address{Jonathan Weitsman, Department of Mathematics, Northeastern University, Boston, MA 02115}
\email{j.weitsman@neu.edu}

\thanks{P. Mir is funded in part by the Doctoral INPhINIT - RETAINING grant LCF/BQ/DR21/11880025 of “la Caixa” Foundation}
\thanks{P. Mir and E. Miranda are partially supported by the AEI grant PID2019-103849GB-I00 of MCIN/ AEI /10.13039/501100011033}
\thanks{E. Miranda is supported by the Catalan Institution for Research and Advanced Studies via an ICREA Academia Prizes 2016 and 2021 and by the Spanish State Research Agency, through the Severo Ochoa and Mar\'{\i}a de Maeztu Program for Centers and Units of Excellence in R\&D (project CEX2020-001084-M)}
\thanks{J. Weitsman was supported in part by a Simons collaboration grant}

\title{Bohr-Sommerfeld quantization of $b$-symplectic toric manifolds}

\maketitle

\epigraph{Dedicated to Victor Guillemin}

\section{Introduction}

Singular symplectic manifolds appear in the investigation of geometrical and dynamical facets of non-compact manifolds as their natural compactifications. This is exemplified in the work undertaken by \cite{KiesenhoferMirandaScott16} and \cite{MirandaOms} on the restricted three-body problem. They also appear in the realm of quantization, where new procedures are required to extend the classic quantization scheme to the compactification of symplectic manifolds.

For compact manifolds coming from a physical system, one of the minimal requirements for a quantization model is that it is finite-dimensional, something that has long been sought after. This quantization challenge was tackled, for instance, by Guillemin, Miranda and Weitsman using the formal geometric quantization of \cite{Weitsman01} and \cite{Paradan09}, and the surprising result they proved in \cite{GuilleminMirandaWeitsman18} is that the formal geometric quantization of a $b$-symplectic manifold a is indeed a finite-dimensional vector space. This raised the natural question of whether there is a true geometric quantization of such a space. An answer was given in the affirmative in \cite{BravermanLoizidesSong} and \cite{LinLoizidesSjamaarSong21}, where virtual modules agreeing with the formal geometric quantization of \cite{GuilleminMirandaWeitsman18} were constructed analytically using index theory.

The purpose of this paper is to revisit the question in the context of Bohr-Sommerfeld quantization, as it was done in \cite{GuilleminSternberg83}, but restricting ourselves to the case of symplectic and $b$-symplectic toric manifolds.

We start revisiting a result of Guillemin and Sternberg \cite{GuilleminSternberg83}, which identifies the Bohr-Sommerfeld leaves of a symplectic manifold endowed with an integrable system with the integer points in the image of its moment map. This allows to read the geometric quantization of a symplectic toric manifold from its Delzant polytope. Then, we prove that the same can be done for $b$-symplectic toric manifolds.

Next, we prove our main result, Theorem \ref{thm:b-GQcoincidesFGQ}. It states that, for any integral $b$-symplectic toric manifold, the Bohr-Sommerfeld quantization with sign agrees with the formal geometric quantization of \cite{GuilleminMirandaWeitsman18}. For this, we need to introduce the definition of Bohr-Sommerfeld quantization with sign, which we do via $T$-modules.

The article is organized as follows. In Section \ref{section:preliminaries} we recall the necessary preliminaries on $b$-symplectic manifolds, Bohr-Sommerfeld and formal geometric quantization. In Section \ref{section:BSleavesviaMM}, following the idea of \cite{GuilleminSternberg83}, we prove that the Bohr-Sommerfeld leaves of an integral $b$-symplectic toric manifold can be obtained from the image of the moment map of the toric action. In Section \ref{sec:BSquantwithsign} we introduce the Bohr-Sommerfeld quantization with sign via $T$-modules. In Section \ref{section:equivalencegqfgq} we prove the equivalence between Bohr-Sommerfeld quantization and formal geometric quantization both for integral symplectic and $b$-symplectic toric manifolds.

\section{Preliminaries. $b$-Symplectic manifolds, Bohr-Sommerfeld quantization and formal geometric quantization}
\label{section:preliminaries}

In this preliminaries section we review the definitions of $b$-symplectic manifolds, Bohr-Sommerfeld quantization and formal geometric quantization. We also recall the results that we use in the other sections.

\subsection{$b$-Symplectic manifolds}

$b$-Symplectic geometry is a generalization of symplectic geometry that furnishes manifolds with boundary with a Poisson structure which lowers rank at a singular hypersurface. It is possible to associate a tangent and a cotangent bundle to the class of $b$-symplectic manifolds and to expand there the classical symplectic tools. This singular model, presented in \cite{GuilleminMirandaPires11}, \cite{GuilleminMirandaPires14} and \cite{GuilleminMirandaPiresScott15}, reveals to be useful for many families of physical problems for which symplectic manifolds are not enough.

We proceed to briefly summon the necessary definitions in $b$-symplectic geometry and we refer to the aforementioned articles for a complete overview. Recall that a \textit{$b$-manifold} is a pair $(M,Z)$ where $Z$ is a hypersurface in a manifold $M$ and a \textit{$b$-map} is a map $f:(M_1,Z_1) \longrightarrow (M_2, Z_2)$ between $b$-manifolds with $f$ is transverse to $Z_2$ and $Z_1 = f^{-1}(Z_2)$.

\begin{definition}[$b$-vector field]
A vector field on a $b$-manifold $(M,Z)$ is called a \textit{$b$-vector field} if it is tangent to $Z$ at every point $p\in Z$.
\label{def:b-vectorfield}
\end{definition}

Let $(M^n,Z)$ be a $b$-manifold. If $x$ is a local defining function for $Z$ on an open set $U\subset M$ and $(x,y_1,\ldots,y_{n-1})$ is a chart on $U$, then the set of $b$-vector fields on $U$ is a free $C^\infty(M)$-module with basis
$$(x {\frac{\partial}{\partial x}}, {\frac{\partial}{\partial y_1}},\ldots, {\frac{\partial}{\partial y_n}}).$$

There exists a vector bundle associated to this module called the \textit{$b$-tangent bundle} and denoted by $^b TM$. The \textit{$b$-cotangent bundle} $^b T^*M$ of $M$ is defined to be the vector bundle dual to $^b TM$.

For each $k>0$, let $^b\Omega^k(M)$ denote the space of sections of the vector bundle $\Lambda^k(^b T^*M)$, called \textit{$b$-de Rham $k$-forms}. For any defining function $f$ of $Z$, every $b$-de Rham $k$-form can be written as
\begin{equation}
\omega=\alpha\wedge\frac{df}{f}+\beta, \text{ with } \alpha\in\Omega^{k-1}(M) \text{ and } \beta\in\Omega^k(M).
\end{equation}

This decomposition enables us to extend the exterior operator $d$ to $^b\Omega(M)$ by setting
$$d\omega=d\alpha\wedge\frac{df}{f}+d\beta.$$

The right hand side agrees with the usual exterior operator $d$ on $M\setminus Z$ and extends smoothly over $M$ as a section of $\Lambda^{k+1}(^b T^*M)$. The fact that $d^2=0$ allows us to define a complex of $b$-forms, the \textit{$b$-de Rham complex}. The cohomology associated to this complex is the \textit{$b$-cohomology} and it is denoted by \textit{$^b H^*(M)$}. The elements of $^b\Omega^0(M)$ are also called \textit{$b$-functions} and the following definition characterizes them.

\begin{definition}[$b$-function]
The set of \textit{$b$-functions} $^b C^\infty(M)$ consists of functions with values in $\mathbb{R} \cup \{\infty\}$ of the form $$c\,\textrm{log}\vert f\vert + g,$$ where $c \in \mathbb{R}$, $f$ is a defining function for $Z$ and $g$ is a smooth function on $M$. The differential operator $d$ is defined as:
$$d(c\,\textrm{log}\vert f\vert + g):= \frac{c \, df}{f} + d g \in\, ^b\Omega^1(M),$$ where $d g$ and $d f$ are the standard de Rham derivatives.
\label{def:b-function}
\end{definition}

A special class of closed $2$-forms of the complex of $b$-forms is the class of \textit{$b$-symplectic forms} as defined in \cite{GuilleminMirandaPires14}. It contains forms with singularities and can be introduced formally for $b$-symplectic manifolds, making it possible to extend the symplectic structure from $M\backslash Z$ to the whole manifold $M$.

\begin{definition}[$b$-symplectic manifold]
Let $(M^{2n},Z)$ be a $b$-manifold and $\omega\in\,^b\Omega^2(M)$ a closed $b$-form. We say that $\omega$ is \textit{$b$-symplectic} if $\omega_p$ is of maximal rank as an element of $\Lambda^2(\,^b T_p^* M)$ for all $p\in M$. The triple $(M,Z,\omega)$ is called a \textit{$b$-symplectic manifold}.
\label{def:bsymplecticmanifold}
\end{definition}

The Mazzeo-Melrose Theorem describes the relationship between $b$-cohomology and de Rham cohomology and makes it natural to talk of integrality of $b$-forms.

\begin{theorem}[Mazzeo-Melrose]
\label{thm:mazzeomelrose}
The $b$-cohomology groups of $(M^{2n},Z)$ satisfy
$$^b H^*(M)\cong H^*(M)\oplus H^{*-1}(Z).$$
\end{theorem}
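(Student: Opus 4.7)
The plan is to build a short exact sequence of chain complexes relating the smooth de Rham complex of $M$, the $b$-de Rham complex of $(M,Z)$, and the smooth de Rham complex of $Z$, and then to show that the associated long exact sequence in cohomology splits.

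First I would define a global residue map $\operatorname{Res}\colon {}^b\Omega^k(M)\to\Omega^{k-1}(Z)$. Near any point of $Z$, a $b$-form $\omega$ admits the decomposition $\omega=\alpha\wedge\tfrac{df}{f}+\beta$ of equation~(1) in terms of a local defining function $f$, and I set $\operatorname{Res}(\omega):=\iota^*\alpha$, where $\iota\colon Z\hookrightarrow M$ is the inclusion. I would check that this is independent of the choice of defining function (two such differ by a nonvanishing smooth multiple $u$, and $\tfrac{du}{u}$ is smooth, so its contribution vanishes upon restriction to $Z$), that it glues to a well-defined global chain map (using $d\omega=d\alpha\wedge\tfrac{df}{f}+d\beta$), and that smooth forms have vanishing residue.

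Second, I would assemble the short exact sequence of complexes
\begin{equation*}
0 \longrightarrow \Omega^*(M) \xrightarrow{\,i\,} {}^b\Omega^*(M) \xrightarrow{\operatorname{Res}} \Omega^{*-1}(Z) \longrightarrow 0,
\end{equation*}
where $i$ is the natural inclusion. Exactness in the middle uses that $\operatorname{Res}(\omega)=0$ forces $\alpha=f\tilde\alpha$ for a smooth $\tilde\alpha$, whence $\alpha\wedge\tfrac{df}{f}=\tilde\alpha\wedge df$ is smooth. Surjectivity of $\operatorname{Res}$ follows from a partition of unity argument on a tubular neighborhood of $Z$. Taking cohomology then produces the long exact sequence
\begin{equation*}
\cdots\to H^k(M)\to {}^bH^k(M)\xrightarrow{\operatorname{Res}_*} H^{k-1}(Z)\xrightarrow{\,\delta\,} H^{k+1}(M)\to\cdots
\end{equation*}

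The main obstacle will be showing that the connecting homomorphism $\delta$ vanishes, which is equivalent to producing a section of $\operatorname{Res}_*$. Given a closed $(k-1)$-form $\gamma$ on $Z$, fix a tubular neighborhood $U\cong Z\times(-\varepsilon,\varepsilon)$ with projection $\pi\colon U\to Z$ and a bump function $\chi$ equal to $1$ near $Z$ and supported in $U$. The candidate $\omega:=\chi\,\pi^*\gamma\wedge\tfrac{df}{f}$ is a $b$-form with $\operatorname{Res}(\omega)=\gamma$ but is not closed in general: one computes $d\omega=d\chi\wedge\pi^*\gamma\wedge\tfrac{df}{f}$, which is supported where $d\chi\neq 0$, hence away from $Z$, hence is a genuine smooth form. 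A direct calculation on $M\setminus Z$ gives $d\omega=\pm\, d\bigl(\log|f|\cdot d\chi\wedge\pi^*\gamma\bigr)$, and the form $\log|f|\cdot d\chi\wedge\pi^*\gamma$ extends smoothly by zero across $Z$ since $d\chi$ vanishes there. Therefore $\omega':=\omega\mp\log|f|\cdot d\chi\wedge\pi^*\gamma$ is a closed $b$-form with $\operatorname{Res}(\omega')=\gamma$, providing the section that splits the long exact sequence and yields the asserted decomposition ${}^bH^*(M)\cong H^*(M)\oplus H^{*-1}(Z)$.
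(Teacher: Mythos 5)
The paper does not prove this theorem; it states it as a cited result of Mazzeo and Melrose and uses it as a black box. Your proposal supplies the standard proof: extract the residue short exact sequence of cochain complexes
$0\to\Omega^*(M)\to{}^b\Omega^*(M)\xrightarrow{\operatorname{Res}}\Omega^{*-1}(Z)\to 0$,
pass to the long exact sequence, and kill the connecting map by exhibiting, for each closed $\gamma\in\Omega^{k-1}(Z)$, a closed $b$-form with residue $\gamma$. The correction $\omega'=\chi\,\pi^*\gamma\wedge\tfrac{df}{f}\mp\log|f|\,d\chi\wedge\pi^*\gamma$ is exactly the standard device, and your bookkeeping is right: the $\pm$ is $(-1)^{\deg\gamma}$, and $\log|f|\,d\chi\wedge\pi^*\gamma$ is smooth because $d\chi\equiv 0$ on a neighbourhood of $Z$. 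Since $\operatorname{Res}_*$ is then surjective, $\delta=0$ and $i_*$ is injective, and over $\mathbb{R}$ the resulting short exact sequences split, giving the stated decomposition.

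Two of the routine checks are stated a bit too loosely. First, $\iota^*\alpha=0$ does not force $\alpha=f\tilde\alpha$ near $Z$; it only forces $\alpha=f\tilde\alpha+df\wedge\tilde\gamma$ for smooth $\tilde\alpha,\tilde\gamma$. The extra term is harmless because $df\wedge\tilde\gamma\wedge\tfrac{df}{f}=0$, but this must be said for middle exactness to be complete. Second, your well-definedness check for $\operatorname{Res}$ addresses only the ambiguity in the defining function $f$, not the non-uniqueness of the decomposition $\omega=\alpha\wedge\tfrac{df}{f}+\beta$ itself: if $\alpha\wedge\tfrac{df}{f}+\beta=\alpha'\wedge\tfrac{df}{f}+\beta'$, then $(\alpha-\alpha')\wedge\tfrac{df}{f}$ is smooth, which by the same local computation forces $\iota^*(\alpha-\alpha')=0$, so $\iota^*\alpha$ is genuinely well-defined. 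Both are the same computation in mild disguise; once you record it once, your argument is complete.
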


\begin{remark}
The integrality of a $b$-form $\omega$ in the sense of \cite{GuilleminMirandaWeitsman18} implies the integrality of the form on $M\setminus Z$. Since we will work with a line bundle on $M$ whose Chern class is given by the projection of $[\omega]$ to $H^*(M)$, its restriction to $M\setminus Z$ has Chern class $[\omega_{M\setminus Z}]$.
\end{remark}

\subsection{Symplectic and $b$-symplectic toric manifolds}

A classical theorem on the classification of symplectic toric manifolds is Delzant's Theorem. It characterizes any such manifold via its Delzant polytope, the convex polytope corresponding to the entire image of the moment map of the toric action.

\begin{theorem}[Delzant, \cite{Delzant88}]
Let $(M^{2n},\omega,\mu)$ be a toric manifold. Then, there is a bijective correspondence between the following two sets, which is given by the image of the moment map $\mu$:
$$\begin{array}{ccc} \{\hbox{toric manifolds}\}&\longrightarrow
&\{\hbox{Delzant polytopes}\}
\\ (M^{2n},\omega,\mu) & \longrightarrow & \mu(M)
\end{array}$$
\label{thm:delzant}
\end{theorem}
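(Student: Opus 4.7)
The plan is to prove the two directions separately, relying on three classical ingredients: the Atiyah--Guillemin--Sternberg convexity theorem, the equivariant Darboux--Weinstein local normal form at fixed points, and Delzant's explicit construction via symplectic reduction of $\mathbb{C}^d$.

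First, I would show that the image $\mu(M)$ is a Delzant polytope. The convexity theorem of Atiyah and Guillemin--Sternberg already gives that $\mu(M)$ is a convex polytope whose vertices are the images of the fixed points of the $T$-action. To verify the Delzant conditions (simplicity, rationality, and smoothness at each vertex), I would pick a fixed point $p$ and invoke the equivariant Darboux theorem: because $T$ acts effectively and $\dim T=\tfrac{1}{2}\dim M$, the isotropy representation on $T_pM$ decomposes into $n$ weight lines with weights $\alpha_1,\dots,\alpha_n$ that form a $\mathbb{Z}$-basis of the weight lattice $\mathfrak{t}^*_{\mathbb{Z}}$. These weights are precisely the inward primitive edge vectors at the vertex $\mu(p)$, yielding the Delzant conditions.

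Second, for surjectivity I would execute Delzant's construction. Given a Delzant polytope $\Delta\subset\mathfrak{t}^*$ with $d$ facets, write $\Delta=\{x\in\mathfrak{t}^*:\langle x,v_i\rangle\geq\lambda_i,\ i=1,\dots,d\}$, with $v_1,\dots,v_d$ the inward primitive normals. The map $\pi:\mathbb{R}^d\to\mathfrak{t}$ sending $e_i\mapsto v_i$ is surjective and, by the Delzant condition, integrates to a short exact sequence of tori $1\to N\to T^d\to T\to 1$. I would then perform symplectic reduction of $(\mathbb{C}^d,\omega_{\mathrm{std}})$ by the induced $N$-action at the level dual to $(\lambda_1,\dots,\lambda_d)$. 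A standard check shows the reduction is a smooth compact symplectic manifold of dimension $2n$, the residual $T$-action is Hamiltonian and effective, and its moment map image is exactly $\Delta$.

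The hard step is uniqueness: two toric manifolds with the same Delzant polytope must be $T$-equivariantly symplectomorphic. The idea would be to cover $\Delta$ by the open stars of its faces, lift these to equivariant tubular neighbourhoods in $M$ via the local normal form, and use a $T$-equivariant Moser argument to build a symplectomorphism on each chart. The global gluing requires $T$-equivariant cohomology vanishing on $\Delta$, which holds because $\Delta$ is contractible. This patching, rather than any single local computation, is the main obstacle, and I would carry it out by induction on the codimension of the orbit-type stratum.
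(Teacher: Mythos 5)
This statement is quoted in the paper as a classical background result with a citation to Delzant's 1988 paper; the authors give no proof of it, so there is no in-paper argument to compare yours against. Judged on its own terms, your outline is the standard and correct architecture of Delzant's proof: Atiyah--Guillemin--Sternberg convexity for the polytope image, the equivariant local normal form at fixed points to extract the isotropy weights and verify the smoothness condition at each vertex, the reduction of $\mathbb{C}^d$ by the kernel torus $N$ for surjectivity, and an equivariant Moser-plus-gluing argument for uniqueness. Two small points deserve care if you were to write this out in full. First, at a fixed point the isotropy weights are a $\mathbb{Z}$-basis of the weight lattice \emph{because} the action is effective and $\dim T=n$; you should spell out why effectiveness forces the induced map $T\to (S^1)^n$ to be an isomorphism rather than merely injective with finite cokernel, since this is exactly where the smoothness condition (as opposed to mere simplicity and rationality) comes from. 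Second, the uniqueness step is, as you say, the real content: the patching is controlled by the first cohomology of a sheaf of local automorphisms over $\Delta$, and contractibility of $\Delta$ alone is not quite enough --- one needs the sheaf to be soft or the relevant \v{C}ech obstruction to vanish for structural reasons (Delzant reduces it to the vanishing of $H^1$ and $H^2$ of $\Delta$ with coefficients in $\mathbb{Z}^n$ and in a sheaf of smooth functions), so your appeal to ``equivariant cohomology vanishing'' should be made precise before the argument is complete.
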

In a symplectic toric manifold $(M^{2n},\omega,\mu)$, the singularities of the moment map $\mu$ can only be of elliptic type, in the sense of Williamson \cite{Williamson36}. In fact, the singular leaves of the toric foliation are in correspondence with the points in the boundary of Delzant's polytope, as it is detailed in the following observation.

\begin{remark}
Let $(M^{2n},\omega,\mu)$ be a symplectic toric manifold and $\Delta$ its Delzant's polytope. For $k=1,\dots,n$, the points in the intersection of $k\leq n$ facets of $\Delta$ correspond to the leaves of $M$ where $\mu$ has $k$ singular elliptic components. In particular, the vertices of $\Delta$ correspond to the fixed points of $\mu$. On the other hand, and in the appropriate coordinates, the elliptic singular components of the moment map at any singular leaf can be written as a sum of squares \cite{Eliasson90}.
\label{rem:ellipticpoints}
\end{remark}

The classification of $b$-symplectic toric surfaces was established by Guillemin, Miranda, Pires and Scott. Their result is that any $b$-symplectic surface is either a $b$-symplectic sphere or a $b$-symplectic torus.

\begin{theorem}[Guillemin-Miranda-Pires-Scott, \cite{GuilleminMirandaPiresScott15}]
A $b$-symplectic surface with a toric $S^1$-action is equivariantly $b$-symplectomorphic to either $(S^2,Z)$ or $(T^2,Z)$, where $Z$ is a collection of latitude circles (in the $T^2$ case, an even number of such circles), the action is the standard rotation, and the $b$-symplectic form is determined by the modular periods of the critical curves and the regularized Liouville volume.
\label{thm:b-surfaces}
\end{theorem}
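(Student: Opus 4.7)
The plan is to follow a classification via equivariant normal forms, carried out in three stages: identifying the underlying topology, pinning down the invariant hypersurface $Z$, and showing that once the modular and Liouville invariants are fixed, the $b$-symplectic form is equivariantly unique.

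First I would note that in real dimension $2$ a toric action is an effective Hamiltonian $S^1$-action, and a compact orientable surface admitting such an action is diffeomorphic to either $S^2$ or $T^2$, with the action equivariantly equivalent to the standard rotation. Because the critical hypersurface $Z$ is $S^1$-invariant and one-dimensional, each of its connected components is a union of orbits, so $Z$ must be a disjoint union of latitude circles. On $T^2$, the $b$-Poisson bivector must change orientation each time a meridian crosses a component of $Z$, so the number of components is forced to be even; on $S^2$ no such parity restriction applies.

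Next, I would analyze $\omega$ locally around a component $Z_i$. Using the $b$-Darboux theorem in dimension $2$, a $S^1$-invariant tubular neighbourhood of $Z_i$ is equivariantly $b$-symplectomorphic to $(S^1\times(-\varepsilon,\varepsilon),\omega_{\mathrm{loc}})$ with
$$\omega_{\mathrm{loc}} = c_i\,\frac{dx}{x}\wedge d\theta,$$
where $c_i\in\mathbb{R}$ is the modular period of $Z_i$ and $\theta$ parametrizes the $S^1$-orbits. This gives a unique local model around each critical circle, with the modular period as the only local invariant.

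Finally, I would glue these local models by an equivariant $b$-Moser argument on the complement. By Mazzeo–Melrose (Theorem \ref{thm:mazzeomelrose}), the class of $\omega$ in $^b H^2(M)$ decomposes into a component in $H^2(M)$, recorded by the regularized Liouville volume, and a component in $H^1(Z)$, recorded by the collection of modular periods. Two $S^1$-invariant $b$-symplectic forms $\omega_0,\omega_1$ that agree on these invariants therefore satisfy $[\omega_1-\omega_0]=0$ in $^b H^2(M)$, and an equivariant averaging yields an $S^1$-invariant primitive $\alpha\in{}^b\Omega^1(M)$. The $b$-Moser trick then produces an $S^1$-equivariant $b$-isotopy intertwining $\omega_0$ and $\omega_1$.

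The main obstacle I anticipate is the Moser step near $Z$: the vector field defined by $\iota_{X_t}\omega_t=-\alpha$ must be a genuine $b$-vector field, tangent to $Z$, in order to integrate to a $b$-diffeomorphism. The key point is that matching the modular periods forces the singular parts of $\omega_0$ and $\omega_1$ to coincide, so $\omega_1-\omega_0$ is actually smooth across $Z$ and the primitive $\alpha$ is a $b$-form whose contraction with $\omega_t^{-1}$ stays tangent to $Z$, making the $b$-Moser flow well defined and $S^1$-equivariant.
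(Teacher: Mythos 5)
Your argument is essentially correct, but it takes a genuinely different route from the one underlying the paper. The paper does not prove Theorem \ref{thm:b-surfaces} at all: it imports it from \cite{GuilleminMirandaPiresScott15}, where the surface case is obtained as a by-product of the $b$-Delzant classification --- one studies the image of the moment map (a ``$b$-Delzant polytope'', here a chain of intervals and $b$-intervals) and proves existence and uniqueness of the toric $b$-manifold realizing it, cf.\ Proposition \ref{prop:bdelzant} and Proposition \ref{prop:localmomentmap}. What you propose instead is the equivariant version of Radko's classification of $b$-Poisson surfaces: topological classification of effective $S^1$-actions, the parity constraint on $\#Z$ for $T^2$ from the sign change of the dual bivector along a meridian, equivariant $b$-Darboux near each $Z_i$, and a global equivariant $b$-Moser argument keyed on the Mazzeo--Melrose splitting ${}^bH^2(M)\cong H^2(M)\oplus H^1(Z)$, with the $H^1(Z)$ components read off as modular periods and the $H^2(M)$ component as the regularized Liouville volume. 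This is a legitimate and arguably more self-contained proof in dimension $2$; the moment-map route has the advantage of generalizing verbatim to higher-dimensional toric $b$-manifolds, which is what the paper actually needs later.

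Two points in your Moser step deserve to be made explicit. First, the singular parts of $\omega_0$ and $\omega_1$ coincide on the nose only after you use $S^1$-invariance: for an invariant $b$-form the coefficient of $\frac{dx}{x}$ restricted to $Z_i$ is literally $c_i\,d\theta$ with $c_i$ constant, so equality of modular periods does force $\omega_1-\omega_0$ to be smooth; without invariance you would only get agreement of cohomology classes in $H^1(Z_i)$. Second, you need the convex path $\omega_t=(1-t)\omega_0+t\omega_1$ to stay nondegenerate as a $b$-form; this holds here because near $Z$ the common singular part dominates, and away from $Z$ two area forms inducing the same orientation on each component of $M\setminus Z$ (an orientation pinned down by the signs of the modular periods) have nondegenerate convex combinations --- a feature special to dimension $2$ that your argument silently relies on.
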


The theorem above, in fact, boils down from the following result on the classification of higher-dimensional $b$-symplectic toric manifolds also carried out in \cite{GuilleminMirandaPiresScott15}.

\begin{proposition}[Guillemin-Miranda-Pires-Scott, Remark 38 in \cite{GuilleminMirandaPiresScott15}]
Every $b$-symplectic toric manifold is either the product of a $b$-symplectic $T^2$ with a classic symplectic toric manifold, or it can be obtained from the product of a $b$-symplectic $S^2$ with a classic symplectic toric manifold by a sequence of symplectic cuts performed at the north and south “polar caps”, away from the critical hypersurface $Z$.
\label{prop:bdelzant}
\end{proposition}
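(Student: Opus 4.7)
The plan is to combine the local normal form of a $b$-symplectic structure around the critical hypersurface $Z$ with the strong rigidity imposed by a Hamiltonian $T^{n}$-action. Throughout, let $(M^{2n},Z,\omega)$ be a $b$-symplectic toric manifold.

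First, I would invoke the semi-local model of $b$-symplectic manifolds along a connected component $Z_0$ of $Z$: a tubular neighborhood is equivariantly $b$-symplectomorphic to $Z_0 \times (-\varepsilon,\varepsilon)$ with $b$-form $c\,\tfrac{dt}{t}\wedge \theta+\pi^*\omega_{Z_0}$, where $\theta$ is a closed $1$-form on $Z_0$ dual (via the symplectic foliation) to the modular vector field, and $c$ is the modular period. Because the toric action preserves $Z$ and $\omega$, the infinitesimal action restricts to commuting Hamiltonian vector fields on each symplectic leaf of $Z_0$; combining this with the fact that the modular vector field is central in the Poisson algebra, I would show that the modular direction must be tangent to the orbits of a distinguished circle subgroup $S^1_{\mathrm{mod}} \subset T^n$. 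Integrality of the modular period (guaranteed by the periodicity of this toric circle) gives the rational structure needed in what follows.

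Next, I would split off this distinguished circle. Write $T^n = S^1_{\mathrm{mod}} \times T^{n-1}$; the complementary $T^{n-1}$-action on $M$ is Hamiltonian with an ordinary (non-singular) moment map, since the dual pairing with the complementary lattice kills the logarithmic part of $\omega$. Reducing $M$ by $S^1_{\mathrm{mod}}$ at a regular value in $M\setminus Z$ produces a symplectic toric manifold $N^{2n-2}$ for the residual torus, so the Delzant classification applies to $N$. The remaining question is what $M$ looks like as the total space of an $S^1_{\mathrm{mod}}$-bundle over a neighborhood of the critical part, equipped with its $b$-symplectic form.

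Here the case split appears. If $Z_0$ is already a $T^{n-1}$-invariant torus bundle over $N$ that is trivial (equivalently, if the $S^1_{\mathrm{mod}}$-orbit in the transverse direction returns to itself through $Z$), the local model globalizes: the two-dimensional distribution spanned by $S^1_{\mathrm{mod}}$ and the transverse $b$-direction integrates to a family of $b$-symplectic $T^2$'s, and an equivariant $b$-Moser argument produces a global equivariant $b$-symplectomorphism $M \cong (T^2,Z)\times N$. Otherwise, the $S^1_{\mathrm{mod}}$-orbit does not close up through $Z$ but rather terminates at fixed points of the full $T^n$-action (corresponding to vertices of the Delzant polytope lying in the ``polar caps''), in which case the same procedure on the open set $M\setminus\{\text{polar caps}\}$ yields $(S^2,Z)\times N$, and the full manifold $M$ is recovered by performing equivariant symplectic cuts at each polar cap using the complementary circles of $T^n$, exactly as in Lerman's cutting construction.

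The main obstacle is the global step, namely upgrading the semi-local product decomposition around $Z$ to a genuine equivariant $b$-symplectomorphism with $(T^2,Z)\times N$ (respectively with an open subset of $(S^2,Z)\times N$). This requires two non-trivial ingredients: an equivariant $b$-Moser-type theorem to absorb smooth corrections into the product form without creating singularities outside $Z$, and a rigidity argument ensuring that the choices of polytope data for $N$ at $t=\pm\varepsilon$ coincide, so that both sides of $Z$ glue to the same Delzant polytope. Once that is settled, identifying the additional vertices created by the cuts with the ``polar'' fixed points of the $S^2$ factor is routine, and the proposition follows.
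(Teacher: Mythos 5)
This proposition is not proved in the paper at all: it is imported verbatim from Guillemin--Miranda--Pires--Scott (Remark 38 of \cite{GuilleminMirandaPiresScott15}), and the surrounding text only records that it is a consequence of the $b$-Delzant theorem (Theorem 35 there), whose proof is built on the semi-local moment map model restated here as Proposition \ref{prop:localmomentmap}. So the relevant comparison is with the GMPS argument, and measured against that your sketch has the right geometric picture near $Z$ --- in effect you re-derive Proposition \ref{prop:localmomentmap} --- but it is not a proof, for two concrete reasons.

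First, you take as given several facts that are themselves key lemmas in \cite{GuilleminMirandaPiresScott15}: that the modular weight of each component of $Z$ is nonzero for an effective toric action, that $\mathfrak{t}_Z=\ker v_Z$ is a \emph{rational} subspace (otherwise there is no circle subgroup $S^1_{\mathrm{mod}}$ and no splitting $T^n\cong T_Z\times S^1$), and, when $Z$ has several components, that the subtori $\mathfrak{t}_{Z_i}$ and the leaf data agree across components so that a single residual toric manifold $N$ and a single transverse direction make sense globally. Your dichotomy is also phrased misleadingly: orbits of $S^1_{\mathrm{mod}}$ never ``pass through $Z$'' --- the group orbits are tangent to $Z$; the actual case split is whether the image of the singular moment map component closes up into a circle (the $T^2$ case, forcing an even number of components of $Z$) or terminates where the transverse circle action degenerates (the $S^2$ case). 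Second, and decisively, the step you yourself label ``the main obstacle'' --- upgrading the semi-local product near $Z$ to a global equivariant $b$-symplectomorphism, and matching the Delzant data on the two sides of each component of $Z$ --- is precisely the content of the theorem. GMPS handle it not by a global equivariant $b$-Moser argument but by classifying $b$-Delzant polytopes and proving an existence-and-uniqueness (Delzant-type) correspondence, from which the product/cutting description is read off. Deferring that step leaves the proof without its core.
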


The image of the moment map of a $b$-symplectic toric manifold is a \textit{$b$-Delzant polytope} and the classification of Proposition \ref{prop:bdelzant} is a consequence of the $b$-Delzant theorem (Theorem 35 of \cite{GuilleminMirandaPiresScott15}). The main skeleton of the proof builds up from the proposition below (Proposition 18 in \cite{GuilleminMirandaPiresScott15}).

\begin{proposition}
Let $(M^{2n},Z,\omega,\mu)$ be a $b$-symplectic toric manifold, $L$ a leaf of its symplectic foliation and $v_Z$ the modular weight of $Z$. Pick a lattice element $X \in \mathfrak{t}$ that represents a generator of $\mathfrak{t} / \mathfrak{t}_Z$ and pairs positively with $v_{Z}$. Then, there is a neighbourhood $L \times S^1 \times (-\varepsilon, \varepsilon) \cong U \subseteq M$ of $Z$ such that the $T^n$-action on $U\setminus Z$ has moment map
\begin{equation*}
\mu_{U\setminus Z}: L \times S^1 \times \left((-\varepsilon, \varepsilon)\setminus\{0\}\right) \rightarrow \mathfrak{t}^* \cong \mathfrak{t}_{Z}^* \times \mathbb{R},\text{\,\,}
(\ell, \rho, t) \mapsto (\mu_{L}(\ell), c\log|t|),
\label{eqn:localmm}
\end{equation*}
where $c$ is the modular period of $Z$, the map $\mu_{L}: L \rightarrow \mathfrak{t}_{Z}^*$ is a moment map for the $T_{Z}^{n-1}$-action on $L$, and the isomorphism $\mathfrak{t}^* \cong \mathfrak{t}_{Z}^* \times \mathbb{R}$ is induced by the splitting $\mathfrak{t} \cong \mathfrak{t}_Z \oplus \langle X \rangle $.
\label{prop:localmomentmap}
\end{proposition}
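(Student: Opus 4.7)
The plan is to first obtain a $T$-equivariant tubular neighbourhood of $L$ in $M$ with the desired product structure, then put $\omega$ into a normal form on this neighbourhood via an equivariant $b$-Moser argument, and finally read off the moment map.

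The splitting $\mathfrak{t} \cong \mathfrak{t}_Z \oplus \langle X\rangle$ exponentiates to a decomposition $T^n = T_Z^{n-1}\times S^1_X$. Because $X$ pairs positively with the modular weight $v_Z$, the fundamental vector field $X^{\#}$ is transverse to $Z$ in the $b$-tangent sense, and its restriction to $Z$ is tangent to $Z$ with closed orbits that foliate $Z$ as an $S^1$-bundle over the leaf space. Together with the standard tubular neighbourhood theorem applied equivariantly along $Z$, this gives a $T^n$-equivariant diffeomorphism $U \cong L \times S^1 \times (-\varepsilon,\varepsilon)$, with $t$ a defining function for $Z$, $\rho$ the angular coordinate on which $S^1_X$ acts by rotation, and $T_Z^{n-1}$ acting only on the $L$-factor.

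Next, apply an equivariant $b$-Darboux argument (a $T$-invariant Moser path between $\omega$ and the model) to put the $b$-symplectic form on $U$ into the normal form
\begin{equation*}
\omega|_U = \pi^{*}\omega_L + c\,\frac{dt}{t}\wedge d\rho,
\end{equation*}
where $\pi\colon U\to L$ is the projection and $c$ is the modular period of $Z$, which appears as the residue of $\omega$ in the $X$-direction and is intrinsic by Mazzeo–Melrose (Theorem \ref{thm:mazzeomelrose}). The Moser isotopy can be chosen $T^n$-invariant because averaging over the compact torus preserves the cohomological obstruction classes and the primitives can be taken to be $T^n$-invariant.

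Finally, on $U\setminus Z$ compute the components of $\mu$ from the model. For $Y\in\mathfrak{t}_Z$ the fundamental vector field $Y^{\#}$ lifts from $L$, so $\iota_{Y^{\#}}\omega = \pi^{*}\iota_{Y^{\#}}\omega_L = -\pi^{*}d\langle\mu_L, Y\rangle$, identifying the $\mathfrak{t}_Z^{*}$-component of $\mu$ with $\mu_L(\ell)$. For $X$ we have $X^{\#} = \partial/\partial\rho$ and $\iota_{\partial/\partial\rho}\omega = -c\,dt/t = -d(c\log|t|)$, so the $\mathbb{R}$-component of $\mu$ is $c\log|t|$ up to an additive constant, which is normalized to zero by declaring $Z$ to be the locus where this component equals $-\infty$ symmetrically.

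The main obstacle is the equivariant $b$-Darboux step: one must run Moser on the singular form $dt/t\wedge d\rho$ while simultaneously keeping the isotopy $T^n$-invariant and transverse to $Z$. The crucial point is that the relevant obstruction lies in $b$-cohomology, and its nontrivial residue piece—dictated by Mazzeo–Melrose—is precisely captured by the modular period $c$, after which standard Moser arguments applied to $T$-invariant $b$-primitives close the proof.
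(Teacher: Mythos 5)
The paper does not actually prove this proposition: it is quoted verbatim as Proposition 18 of \cite{GuilleminMirandaPiresScott15}, so there is no in-paper argument to compare against. Your outline follows the same route as the original proof in that reference: trivialize $Z$ as $L\times S^1$ using the circle generated by $X$, extend to a $T^n$-equivariant collar, bring $\omega$ to the normal form $\pi^*\omega_L + c\,\frac{dt}{t}\wedge d\rho$ by an equivariant relative $b$-Moser argument, and read off the two components of the moment map by contracting fundamental vector fields. The final computation ($\iota_{X^\#}\omega=-c\,dt/t=-d(c\log|t|)$, with the additive constant absorbed into the choice of defining function $t$) is correct.

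Two points need repair. First, the claim that $X^\#$ is ``transverse to $Z$ in the $b$-tangent sense'' is wrong as stated: $Z$ is invariant under the torus action, so every fundamental vector field, $X^\#$ included, is tangent to $Z$. What the positivity $\langle v_Z,X\rangle>0$ actually gives is that $X^\#|_Z$ is transverse to the symplectic foliation of the cosymplectic hypersurface $Z$; it is this transversality that trivializes the mapping-torus structure of $Z$ as $L\times S^1$ and identifies $L$ with $Z/S^1_X$. Second, the equivariant $b$-Moser normal form is where essentially all the content of the proposition lives, and you invoke it rather than establish it. It does hold --- it is the equivariant version of the relative Moser theorem for $b$-symplectic forms of \cite{GuilleminMirandaPires14}, with the residue one-form on $Z$ equal to $c\,d\rho$ by the Mazzeo--Melrose decomposition --- but a complete proof must check that the Moser primitive can be taken simultaneously $T^n$-invariant and a genuine $b$-form (so the isotopy preserves $Z$), and that the smooth part of $[\omega]$ on the collar retracts onto $[\omega_L]$. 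As written, these are asserted rather than proved, so your text is an accurate sketch of the standard argument rather than a self-contained proof.
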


Since the moment map of a group action over a $b$-symplectic manifold is a $b$-function (see Definition \ref{def:b-function}), it can be unbounded due to the logarithm term. Hence, its image is neither convex (in the sense of classical analysis, for a more sophisticated notion of convexity confer \cite{GuilleminMirandaPiresScott17}) in general (see Figure \ref{fig:bspheremomentmap}) nor bounded. This is the main handicap when one tries to obtain a finite Bohr-Sommerfeld quantization of a $b$-symplectic toric manifold and the reason why we introduce the \textit{Bohr-Sommerfeld quantization with sign} in Section \ref{sec:BSquantwithsign}.

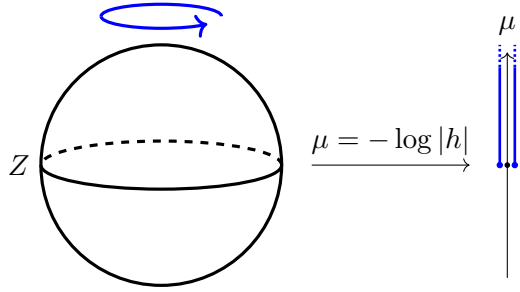
\begin{figure}[ht!]
\begin{center}
\begin{tikzpicture}
\pgfmathsetmacro{\rlinex}{6}
\pgfmathsetmacro{\baseptd}{8}
\pgfmathsetmacro{\rlineybottom}{2.75}
\pgfmathsetmacro{\rlineymid}{4.25}
\pgfmathsetmacro{\rlineytop}{5.75}
\def\R{1.6}
\pgfmathsetmacro{\circlex}{1.4}
\draw[dashed, very thick, color = black] (\circlex + \R, \rlineymid) arc (0:180:{\R} and {\R * .2});
\draw[very thick] (\circlex, \rlineymid) circle (\R);
\draw[very thick, color = black] (\circlex + \R, \rlineymid) arc (0:-180:{\R} and {\R * .2}) node[left] {$Z$};
\draw[->] (\circlex + 2, \rlineymid) -- node[above] {$\mu = -\log|h|$} (\rlinex - 0.5, \rlineymid);
\draw[->] (\rlinex, \rlineybottom) -- (\rlinex, \rlineytop) node[above,yshift=0.1cm] {$\mu$};
\draw[black, fill = black] (\rlinex, \rlineymid) circle(.3mm);
\draw[blue, fill = blue] (\rlinex - 0.1, \rlineymid) circle (1pt);
\draw[blue, fill = blue] (\rlinex + 0.1, \rlineymid) circle (1pt);
\draw[line width = 1pt, blue] (\rlinex - 0.1, \rlineytop-0.2) -- (\rlinex - 0.1, \rlineymid );
\draw[line width = 1pt, blue, densely dotted] (\rlinex - 0.1, \rlineytop-0.2) -- (\rlinex - 0.1, \rlineytop + 0.1);
\draw[line width = 1pt, blue] (\rlinex + 0.1, \rlineytop-0.2) -- (\rlinex + 0.1, \rlineymid);
\draw[line width = 1pt, blue, densely dotted] (\rlinex + 0.1, \rlineytop-0.2) -- (\rlinex + 0.1, \rlineytop + 0.1);
\draw [very thick, ->, blue] (\circlex, \rlineymid + 1.3*\R) ++(\R * -.5, -0.1) arc (180:320: {\R * .5} and {\R * .1});
\draw [very thick, blue] (\circlex, \rlineymid + 1.3*\R) ++(\R * -.5, -0.1) arc (180:0: {\R * .5} and {\R * .1});
\end{tikzpicture}
\end{center}
    \caption{The moment map of the rotation action over the canonical $b$-symplectic sphere is unbounded in any neighbourhood of $Z$.}
    \label{fig:bspheremomentmap}
\end{figure}

\subsection{Bohr-Sommerfeld quantization}
\label{subsec:geomquant}
Let us recall the Bohr-Sommerfeld quantization of a compact symplectic toric manifold using the classical definitions of Kostant \cite{Kostant70} and Guillemin-Sternberg \cite{GuilleminSternberg82}. Let $(M,\omega)$ be an integral symplectic manifold and let $\mathbb{L}$ be a complex line bundle with connection $\nabla$ whose curvature is $\omega$. Geometric quantization is a process which associates to the quadruple $(M,\omega,\mathbb{L},\nabla)$ a Hilbert space $Q(M)$ and to each $f\in C^{\infty}(M)$ a self-adjoint operator $Q(f)$ on that space.

Following \cite{Kostant70}, we define the quantization using the additional data given by a real polarization of $M$. That is, a foliation of $M$ by Lagrangian submanifolds. This foliation may be given by the fibres of a map $\pi:M\to B$ and, in this case, the quantization is given by sections $s\in \Gamma(\mathbb{L})$ satisfying
\begin{equation}
    \nabla_X s=0,
    \label{eq:flatsections}
\end{equation}
for any $X$ tangent to the fibres of $\pi$. If $(M,\omega,\mu)$ a toric manifold, a natural foliation is given by the fibres of the moment map $\mu: M\to \mathfrak{t}^*$.

When $M$ is compact, there are no smooth sections satisfying equation (\ref{eq:flatsections}) defined globally on all $M$. Instead, such \textit{leafwise constant sections} or \textit{flat sections} are concentrated on the fibres $\pi^{-1}(b)$ such that $b\in\Ima(\pi)$ and $\mathbb{L}|_{\pi^{-1}(b)}$ is a trivial bundle. The quantization space is defined as

\begin{equation}
    Q(M)=\bigoplus_{b\in B_{BS}}\mathbb{C}\langle s_b\rangle,
    \label{eq:BSquantclassical}
\end{equation}

where $B_{BS}$ is the Bohr-Sommerfeld set, namely, $$B_{BS}=\{b\in \Ima(\pi)\subset B \colon \mathbb{L}|_{\pi^{-1}(b)} \text{ is trivial}\},$$ and $s_b$ is the corresponding flat section of $\mathbb{L}|_{\pi^{-1}(b)}$.

This definition of quantization is based on the identification of the Bohr-Sommerfeld leaves and we call it \textit{Bohr-Sommerfeld quantization}. It coincides with the classical definition of geometric quantization of a symplectic manifold equipped with a completely integrable system that provides a real polarization. Some authors use sheaves to compute the Bohr-Sommerfeld leaves and obtain the same quantization (see \cite{Sniatycki77} or \cite{Hamilton08}).

\subsection{Formal geometric quantization}

We take the basic definitions of formal geometric quantization of Hamiltonian T-spaces from \cite{GuilleminMirandaWeitsman18}. For further reading on this quantization see \cite{Weitsman01}, \cite{HochsMathai16} and \cite{Paradan09}. 

Suppose $(M,\omega)$ is a compact symplectic manifold and let $(\mathbb{L}, \nabla)$ be a complex line bundle with connection of curvature $\omega$. By twisting the spin-$\mathbb{C}$ Dirac operator on $M$ by $\mathbb{L}$, we obtain an elliptic operator $\bar{\partial}_\mathbb{L}$. The geometric quantization $Q(M)$ of $M$ is defined by
$$Q(M) = {\rm ind}(\bar{\partial}_\mathbb{L}),$$
and it is a virtual vector space.

If $(M,\omega)$ is a compact integral symplectic manifold, one can always find a complex line bundle $\mathbb{L}$ with connection $\nabla$ of curvature $\omega$ and the quantization $Q(M)$ is independent of this choice.

If $M$ is equipped with a Hamiltonian action of a torus $T$, the action can be lifted to $\mathbb{L}$ and the almost complex structure of $\mathbb{L}$ can be chosen to be $T$-invariant. In this case, the quantization $Q(M)$ is a finite-dimensional virtual $T$-module.

For $\xi \in \mathfrak{t}^*,$ denote by $M//_\xi T$ the reduced space of $M$ at $\xi$. For $\alpha$ a weight of $T$ and $V$ a virtual $T$-module, denote by $V^\alpha$ the sub-module of $V$ of weight $\alpha$.

The following result states that the component of weight $\alpha$ of the quantization of $M$ equals the quantization of the reduced space of $M$ at $\alpha$.

\begin{theorem}[Quantization commutes with reduction, \cite{Meinrenken96}]
Let $(M,\omega)$ is a compact integral symplectic manifold. Suppose $M$ is equipped with a Hamiltonian action of a torus $T$ and let $\alpha$ be a weight of $T$. Then
\begin{equation}
Q(M)^\alpha = Q(M//_\alpha T).
\label{eq:quantredsp}
\end{equation}
In other words,
\begin{equation}
Q(M) = \bigoplus_\alpha Q(M//_\alpha T) \alpha.
\label{eq:sumquantredsp}
\end{equation}
\label{th:QR}
\end{theorem}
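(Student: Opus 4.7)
The plan is to reduce to the case $\alpha = 0$ via the shifting trick and then compute the $T$-invariant part of the equivariant index by localizing near $\mu^{-1}(0)$. For the shifting trick, I would replace $(\mathbb{L},\nabla)$ by its tensor product with the trivial line bundle $\mathbb{C}_{-\alpha}$ carrying the $T$-action of weight $-\alpha$. The twisted moment map is $\mu - \alpha$, so the reduction at $\alpha$ of the original data becomes the reduction at $0$ of the twisted data, and extracting the weight-$\alpha$ piece of $Q(M)$ is identified with extracting the $T$-invariant piece of the twisted quantization. It therefore suffices to prove $Q(M)^T = Q(M//_0 T)$, under the assumption that $0$ lies in the image of $\mu$ (otherwise both sides vanish, the left by a standard Atiyah--Bott weight argument and the right by definition).

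Given this reduction, the core computation proceeds via equivariant index theory. One route is Meinrenken's original approach using Lerman's symplectic cutting: one cuts $M$ along a regular level set of $\mu$ near $0$, producing pieces whose equivariant indices add up to $Q(M)$ by multiplicativity/additivity of the index under cutting. An explicit model near $\mu^{-1}(0)$ identifies a tubular neighborhood with a $T$-equivariant vector bundle over $M//_0 T$, and a Koszul resolution of its normal bundle identifies the $T$-invariant contribution with $Q(M//_0 T)$; the complementary piece is shown to have trivial $T$-invariant index. An equivalent route, due to Tian--Zhang and Paradan, is to deform the spin-$\mathbb{C}$ Dirac operator by the Kirwan vector field $\operatorname{grad}|\mu|^2$: the deformed index localizes to the critical set of $|\mu|^2$, which is a union of strata indexed by coadjoint orbits, and a weight analysis along each non-zero stratum shows that its $T$-invariant contribution vanishes, leaving only the contribution from $\mu^{-1}(0)$.

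The main obstacle will be handling the case in which $0$ (after shifting) is not a regular value of $\mu$. In that case $M//_0 T$ is a singular symplectic quotient and the symbol $Q(M//_0 T)$ must be defined through a partial desingularization in the sense of Meinrenken--Sjamaar, after which one checks that the resulting virtual module is independent of the desingularization. For the compact integral toric setting of interest in this paper the singular locus is controlled by the Delzant polytope: the weights $\alpha$ for which $0$ fails to be a regular value are exactly those lying on proper faces of $\Delta$, and the associated reduced spaces are symplectic toric orbifolds (often just points), so $Q(M//_\alpha T)$ is unambiguously defined there. The technical heart of the proof is therefore the verification of additivity of the equivariant index under the cutting (or Witten) decomposition and of the vanishing of the non-zero-stratum contributions; the identification of the remaining contribution with $Q(M//_0 T)$ is then a local computation on a neighborhood of the zero level.
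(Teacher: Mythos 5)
You should first note that the paper does not prove this statement at all: Theorem \ref{th:QR} is quoted verbatim from \cite{Meinrenken96} as an external input, so there is no in-paper argument to compare yours against. Judged on its own, your proposal is a faithful reconstruction of the architecture of the known proofs: the shifting trick (for a torus the coadjoint orbit through $\alpha$ is a point, so twisting $\mathbb{L}$ by $\mathbb{C}_{-\alpha}$ correctly reduces the weight-$\alpha$ statement to the invariant-part statement at $0$), Meinrenken's symplectic-cutting route, and the Witten-deformation route of Tian--Zhang and \cite{Paradan09}, together with the Meinrenken--Sjamaar desingularization needed at singular values. The three items you isolate as the ``technical heart'' --- additivity of the equivariant index under cutting, vanishing of the invariant contributions from the nonzero strata of the critical set of $|\mu|^2$, and the Koszul/normal-bundle computation identifying the zero-level contribution with $Q(M//_0T)$ --- are indeed exactly where the work lies, but in your write-up they remain assertions. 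As it stands this is a correct roadmap to the literature rather than a self-contained proof; each deferred step is itself a multi-page argument, so you should either carry one route out in full or present the theorem, as this paper does, as a citation.

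One further remark worth making: in the only generality this paper actually uses the theorem --- compact integral \emph{toric} manifolds --- a far more elementary argument is available and would make your proof self-contained. For a Delzant space one can compute $Q(M)$ directly from the fixed-point data (or from Delzant's construction as a symplectic quotient of $\mathbb{C}^d$), obtaining character $\sum_{\alpha\in\Delta\cap\mathbb{Z}^n}e^{\alpha}$ with all multiplicities equal to one, while every nonempty reduced space $M//_\alpha T$ is a single point (or orbifold point) with $Q(\mathrm{pt})=\mathbb{C}$; equation (\ref{eq:sumquantredsp}) then follows by inspection. If your goal is to support the applications in Section \ref{section:equivalencegqfgq}, that toric special case is both sufficient and provable without invoking the full analytic machinery you outline.
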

Theorem \ref{th:QR} and equation (\ref{eq:sumquantredsp}) are valid only for regular values of the moment map of the Hamiltonian $T$-action. In the case where $\alpha$ is a singular value of the moment map, the singular quotient must be replaced by a slightly different construction using a shift of $\alpha$ \cite{Meinrenken96}. A similar caution applies in the case of Hamiltonian $T$-spaces which are non-compact and in the case of $b$-symplectic manifolds.

\subsubsection{Formal geometric quantization of non-compact Hamiltonian $T$-spaces}
In the case where $M$ is non-compact, equation (\ref{eq:quantredsp}) is used to \textit{define} the quantization of such Hamiltonian $T$-spaces.

\begin{definition}[Weitsman, \cite{Weitsman01}]
Let $M$ be a Hamiltonian $T$-space with integral symplectic form. Suppose the moment map for the $T$-action is proper. Let $V$ be an infinite-dimensional virtual $T$-module with finite multiplicities. We say
$$V= Q(M)$$
if for any compact Hamiltonian $T$-space $N$ with integral symplectic form, we have
\begin{equation}
(V\otimes Q(N))^T = Q((M \times N)//_0 T).
\label{eq:qreqnoncomp}
\end{equation}
In other words, as in (\ref{eq:sumquantredsp}),
$$Q(M) = \bigoplus_\alpha Q(M//_\alpha T) \alpha,$$
where the sum is taken over all weights $\alpha$ of $T.$
\label{def:fgq}
\end{definition}

The fact that the moment map is proper implies that the reduced space $(M \times N)//_0 T$ is compact for
any compact Hamiltonian $T$-space $N$, so that the right hand side of equation (\ref{eq:qreqnoncomp}) is well-defined.

\subsubsection{Formal geometric quantization of $b$-symplectic manifolds}

Suppose now that $(M,\omega)$ is compact, connected, oriented and not symplectic but $b$-symplectic. Suppose that it is equipped with a Hamiltonian action of a torus $T$, with nonzero leading modular weight. Let $\mathbb{L}$ be a complex line bundle on $M$ with connection $\nabla$ on $\mathbb{L}|_{M\setminus Z}$ whose curvature is $\omega|_{M\setminus Z}$.

In \cite{GuilleminMirandaWeitsman21}, the formal geometric quantization $Q(M)$ is defined as the following virtual $T$-module.

\begin{definition}
Let $V$ be a virtual $T$-module with finite multiplicities. We say
$$V=Q(M)$$
if for any compact Hamiltonian $T$-space $N$ with integral symplectic form, we have
\begin{equation}
(V\otimes Q(N))^T = \varepsilon Q((M \times N)//_0 T),\end{equation}
where $Q(N)$ denotes the standard geometric quantization of $N$, $Q(M \times N)//_0 T$ is the geometric quantization of the compact integral symplectic manifold $(M \times N)//_0 T$, and $\varepsilon$ is $+1$ if the symplectic orientation on the symplectic quotient $(M \times N)//_0 T$ agrees with the orientation inherited from $M \times N$ and $-1$ otherwise.
\label{eq:qreqb}
\end{definition}

This means that $Q(M)=Q(M \setminus Z)=\oplus_i \varepsilon_i Q((M \setminus Z)_i)$, where the $(M\setminus Z)_i$ are the connected components of $M \setminus Z$, $Q(M \setminus Z)$ is the formal geometric quantization of the non-compact Hamiltonian $T$-space $M \setminus Z$, and the $\varepsilon_i \in \{\pm 1\}$ are determined by the relative orientations of the symplectic forms on the components of $M \setminus Z$ and the overall orientation of $M$. Alternatively,
$$Q(M) = \bigoplus_\alpha \varepsilon(\alpha) Q(M//_\alpha T) \alpha,$$
where $Q(M//_\alpha T)$ must be defined using the shifting trick if $\alpha$ is not a regular value of the moment map, and each $\varepsilon(\alpha) \in \{ \pm 1\}$ is determined by the relative orientations of $M$ and $M//_{\alpha} T.$

In this $b$-symplectic case, the condition that the modular weight is non-zero guarantees that the reduced space $(M \times N)//_0 T$ is compact and symplectic for any compact Hamiltonian $T$-space $N$, so that $Q((M \times N)//_0 T)$ is well-defined.

\section{Bohr-Sommerfeld leaves via the moment map}
\label{section:BSleavesviaMM}

In this section we prove that, in an integral symplectic toric manifold, the Bohr-Sommerfeld set coincides with the set of integer points in the image of the moment map of the toric action. We prove that the same result also holds for integral $b$-symplectic toric manifolds.

First, let us recall a result from Guillemin and Sternberg \cite{GuilleminSternberg83} that identifies the Bohr-Sommerfeld leaves in a symplectic manifold using the moment map of an integrable system. In particular, it proves that the count of Bohr-Sommerfeld leaves of an integral symplectic manifold equals the count of the integer points in the image of the moment map.

\begin{theorem}[Guillemin-Sternberg, Theorem 2.4 in \cite{GuilleminSternberg83}]
Let $(M,\omega)$ be a $2n$-dimensional symplectic manifold endowed with an integrable system with moment map $\mu: M \to B$. Let $p$ and $q$ be two distinct points of $B$ contained in an open simply connected subset $B_0$ of $B$. Then:
\begin{itemize}
    \item There exists a globally defined system of action coordinates $f_1,\dots,f_n$ on $B_0$ such that $f_1(p) = \cdots =f_n(p) = 0$.
    \item If $p\in B$ is in the Bohr-Sommerfeld set, $q\in B$ is in the Bohr-Sommerfeld set if and only if $f_1(q),\dots,f_n(q)$ are integers.
\end{itemize} 
\label{thm:BSintegerpoints}
\end{theorem}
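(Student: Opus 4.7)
The plan is to follow the classical Arnold--Liouville action-angle approach. First I would exploit the assumption that $B_0$ is open and simply connected (and, implicitly, consists of regular values of $\mu$) to choose a smoothly varying basis $\gamma_1(b),\dots,\gamma_n(b)$ of $H_1(\mu^{-1}(b);\mathbb{Z})$ for $b\in B_0$; the simple connectedness of $B_0$ is exactly what defeats monodromy in the period lattice bundle. Setting
$$f_j(b) = \int_{\Sigma_j(b)} \omega,$$
where $\Sigma_j(b)$ is any smooth 2-chain in $\mu^{-1}(B_0)$ with boundary $\gamma_j(b) - \gamma_j(p)$, gives well-defined smooth functions on $B_0$. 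By construction $f_j(p)=0$, and a direct computation shows that $df_j$ is dual to the $j$-th angular vector field on $\mu^{-1}(b)$, so these are the desired action coordinates and the first bullet is established.

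For the second bullet I would compute the prequantum holonomy in terms of the $f_j$. Let $(\mathbb{L},\nabla)$ have curvature $\omega$. The fiber $\mu^{-1}(b)$ is Bohr--Sommerfeld precisely when the holonomy of $\nabla$ around each generating loop is trivial, so it suffices to test on the $\gamma_j(b)$. On the topologically trivial open set $\mu^{-1}(B_0)$, pick a primitive $\alpha$ of $\omega$; in the standard convention the holonomy along a loop $\gamma$ is $\exp(2\pi i \int_\gamma \alpha)$. Now join $p$ to $q$ by a smooth path $\sigma\subset B_0$ and let $C_j\subset\mu^{-1}(B_0)$ be the cylinder swept out by $\gamma_j$ along $\sigma$, so that $\partial C_j = \gamma_j(q)-\gamma_j(p)$. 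By Stokes' theorem,
$$\int_{\gamma_j(q)} \alpha - \int_{\gamma_j(p)} \alpha = \int_{C_j} \omega = f_j(q),$$
where the last equality uses the defining formula for $f_j$ together with $f_j(p)=0$. Hence the holonomy around $\gamma_j(q)$ differs from the holonomy around $\gamma_j(p)$ by a factor of $\exp(2\pi i f_j(q))$. Since $p$ is Bohr--Sommerfeld, the holonomies at $p$ are trivial, so $q$ is Bohr--Sommerfeld if and only if $f_j(q)\in\mathbb{Z}$ for all $j$.

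The step I expect to be the main obstacle is the global consistency in the first paragraph: the period cycles $\gamma_j(b)$ form a lattice bundle over the base of regular values, and its trivialization is in general obstructed by monodromy; this is precisely why the statement is formulated on a simply connected patch $B_0$. Once a consistent frame of cycles is fixed, the actions $f_j$ are honest global functions on $B_0$ and the rest of the argument is a cohomological rewriting of the Kostant Bohr--Sommerfeld condition via Stokes on a cylinder. For the $b$-symplectic extension later in the paper, one expects this argument to run on each connected component of $M\setminus Z$ separately, with the modular weight contributing a logarithmic term to the action coordinate transverse to the critical hypersurface $Z$.
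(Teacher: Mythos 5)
The paper does not actually prove this statement: it is quoted verbatim as Theorem 2.4 of Guillemin--Sternberg and used as a black box, so there is no in-paper proof to compare against. Your sketch is the standard Arnold--Liouville/Duistermaat argument (trivialize the period lattice over the simply connected base, define actions by fiber integrals of $\omega$, compute the prequantum holonomy around the generating cycles by Stokes on the swept cylinder), and it is correct in outline. It is also consistent in spirit with the special case the paper \emph{does} prove, Theorem \ref{th:BSequalIntegerpoints}, which arrives at the same holonomy condition $e^{2\pi i\mu_j}=1$ by integrating the Kostant formula $D_Xs=\nabla_Xs+i\langle\mu,X\rangle s$ around each angle circle rather than by a Stokes argument.

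Two points in your write-up are asserted rather than justified and deserve care. First, you claim $\mu^{-1}(B_0)$ is ``topologically trivial'' so that $\omega$ admits a global primitive $\alpha$ there; simple connectedness of $B_0$ kills the monodromy of the lattice bundle but does not kill $H^2(\mu^{-1}(B_0))$ (the fibers are tori), so a global primitive need not exist. The fix is standard: either work with local primitives and observe that, for integral $\omega$, the resulting ambiguity in $\int_\gamma\alpha$ is an integer and hence invisible in $\exp(2\pi i\int_\gamma\alpha)$, or phrase the holonomy comparison intrinsically as parallel transport of $(\mathbb{L},\nabla)$ along the cylinder $C_j$, whose curvature integral $\int_{C_j}\omega$ is all you actually use. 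Second, the well-definedness of $f_j(b)=\int_{\Sigma_j(b)}\omega$ independently of the chosen $2$-chain requires that $\omega$ vanish on the relevant $2$-cycles (it does on cycles homologous into a Lagrangian fiber, and in any case the ambiguity is an integer period, so $df_j$ and $f_j\bmod\mathbb{Z}$ --- which is all the second bullet needs --- are unambiguous). With those two caveats patched, your argument is a faithful reconstruction of the cited proof.
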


In Theorem \ref{thm:BSintegerpoints}, the correspondence between the Bohr-Sommerfeld leaves and the integer points of the moment map is established after the election of a globally defined system of action coordinates. Once a Bohr-Sommerfeld leaf is identified at a point $p\in B$, the $0$ of all the action coordinates is set there and the other Bohr-Sommerfeld leaves correspond to the integer points in these coordinates.

As a consequence, the integer condition that Bohr-Sommerfeld leaves have to satisfy can be shifted by an additive constant as long as it is the same constant for all the leaves, since the essential implication of Theorem \ref{thm:BSintegerpoints} is that the difference between the action variables at any two Bohr-Sommerfeld leaves is an integer. In view of this, a value of the moment map has to be fixed at some point (and leaf) of $M$ or, equivalently, a choice of the constant in the moment map has to be made.

We will prove that this choice of a constant in the moment map is also equivalent to the choice of the connection $1$-form $\Theta$ such that $d\Theta=\omega$.

\subsection{Dependence on the connection}
\label{sec:dependenceconnection}

In the following statements we show that we can always find a connection $1$-form $\Theta$ with $d\Theta=\omega$ such that the Bohr-Sommerfeld set coincides with the integer points in the image of the moment map in the appropriate coordinates.

\begin{lemma}
Let $(M,\omega,\mu)$ be a toric symplectic manifold. Let $\mathbb{L}$ be a complex line bundle over $M$ with connection $\nabla$ whose curvature is $\omega$. Given an arbitrary connection $1$-form $\Theta$, we can always produce a connection $\tilde \Theta$ that is invariant under the toric action.
\label{lem:invariantform}
\end{lemma}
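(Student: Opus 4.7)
The plan is to use the classical averaging trick over the compact torus $T$ with its normalized Haar measure. Let $\pi: P \to M$ denote the principal $U(1)$-bundle underlying $\mathbb{L}$, so that a connection on $\mathbb{L}$ is encoded by a 1-form $\Theta \in \Omega^1(P, i\mathbb{R})$ which is invariant under the right $U(1)$-action, reproduces the generators on the vertical subspace, and satisfies $d\Theta = \pi^*\omega$. Since the Hamiltonian $T$-action on $M$ is by a compact connected group that preserves $\omega$, it lifts to an action on $P$ by bundle automorphisms commuting with the right $U(1)$-action; in the Hamiltonian/toric setting this lift is supplied by Kostant's prequantization recipe together with the moment map.

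With such a lift in hand, I set
\begin{equation*}
    \tilde\Theta := \int_T t^*\Theta \, dt.
\end{equation*}
Each pullback $t^*\Theta$ is again a connection 1-form on $P$: the commutation of the $T$-action with $R_g$ and with the vertical flows implies $R_g^*(t^*\Theta) = t^*\Theta$ and $(t^*\Theta)(\xi_P) = \xi$ for every $\xi \in \mathfrak{u}(1)$. Moreover, since $\omega$ is $T$-invariant, $d(t^*\Theta) = t^*\pi^*\omega = \pi^*\omega$ for each $t$. The space of connection 1-forms being an affine subspace of $\Omega^1(P, i\mathbb{R})$, the convex combination $\tilde\Theta$ is itself a connection 1-form, and linearity of $d$ preserves the curvature: $d\tilde\Theta = \pi^*\omega$.

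Finally, the $T$-invariance of $\tilde\Theta$ follows from the left-invariance of Haar measure: for any $s \in T$, the change of variables $u = ts$ yields
\begin{equation*}
    s^*\tilde\Theta = \int_T (ts)^*\Theta\, dt = \int_T u^*\Theta\, du = \tilde\Theta.
\end{equation*}
The main (and essentially only) conceptual step is the existence of the lift of the toric action to the line bundle; this is standard in prequantization, and once granted, the remainder of the argument is the routine averaging used throughout equivariant geometry.
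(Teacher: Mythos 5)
Your proof is correct and follows essentially the same route as the paper, namely averaging $\Theta$ over the compact torus with normalized Haar measure and invoking left-invariance to get $T$-invariance. You supply additional (welcome) detail that the paper leaves implicit — the lift of the toric action to the prequantum bundle and the check that the averaged form is still a connection with curvature $\pi^*\omega$ — but the underlying argument is the same.
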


\begin{proof}
Since the group $G=T^n$ acting on $M$ is compact, there exists a Haar measure $dg$ such that $\int_G dg=1$. Then, the averaging of a form $\Theta$ via $\int_G L_g^*\Theta dg$ provides a $G$-invariant form $\tilde \Theta$.
\end{proof}

\begin{proposition}
Let $(M,\omega,\mu)$ be a symplectic toric manifold. Let $\mathbb{L}$ be a complex line bundle over $M$ with connection $\nabla$ whose curvature is $\omega$. If $\Theta_1$ and $\Theta_2$ are two invariant connection $1$-forms, the function $\langle \Theta_1-\Theta_2,X \rangle$ is constant when $X$ is a vector field tangent to the polarization of $M$ by $\mu$.
\label{prop:differenceconnectionforms}
\end{proposition}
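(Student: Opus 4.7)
The plan is to set $\alpha := \Theta_1 - \Theta_2$ and to show that (i) $\alpha$ is a closed $1$-form on $M$, (ii) $\alpha$ is $T$-invariant, and (iii) for any fundamental vector field $X^{\#}$ associated to an element $X\in\mathfrak{t}$, the pairing $\langle\alpha,X^{\#}\rangle$ is a constant function on $M$. From there, on the regular locus of $\mu$ the tangent distribution to the polarization is spanned pointwise by the fundamental vector fields $X_1^{\#},\dots,X_n^{\#}$ corresponding to a basis of $\mathfrak{t}$, and the conclusion follows.

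For (i), one first needs to note that, although an individual connection form lives on the frame bundle (or depends on a local trivialization), the \emph{difference} of two connection $1$-forms on the same line bundle $\mathbb{L}$ descends to a genuine $1$-form on $M$. Closedness is immediate since $d\Theta_1 = \omega = d\Theta_2$, so $d\alpha = 0$. For (ii), the $T$-invariance of both $\Theta_1$ and $\Theta_2$ — which we may assume thanks to Lemma \ref{lem:invariantform} — yields $L_{X^{\#}}\alpha = 0$. Combining these with Cartan's magic formula produces the key identity
\begin{equation*}
d\langle\alpha,X^{\#}\rangle \;=\; L_{X^{\#}}\alpha \;-\; \iota_{X^{\#}}d\alpha \;=\; 0,
\end{equation*}
so $\langle\alpha,X^{\#}\rangle$ is locally constant; since $M$ is connected, it is constant on $M$. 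This is (iii), and applying it to a basis of fundamental vector fields finishes the proof on the regular locus.

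The only delicate point I foresee is the behaviour at singular points of $\mu$, where the polarization degenerates and some $X_i^{\#}$ may vanish; but since the constancy of $\langle\alpha,X_i^{\#}\rangle$ has been established globally on the connected manifold $M$, these points cause no trouble — the value computed on the open dense regular locus extends by continuity. A minor sanity check worth performing is that the averaging used in Lemma \ref{lem:invariantform} preserves the curvature (it does, because $\omega$ itself is $T$-invariant and averaging commutes with $d$), so that reducing to the $T$-invariant case at the outset is genuinely without loss of generality.
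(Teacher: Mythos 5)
Your proof is correct and follows essentially the same route as the paper's: both set $\alpha=\Theta_1-\Theta_2$ as a genuine $1$-form on $M$, use $d\Theta_1=d\Theta_2=\omega$ to get $d\alpha=0$, invoke the averaging lemma for $T$-invariance, and conclude from Cartan's magic formula that $d\langle\alpha,X\rangle=0$, hence that $\langle\alpha,X\rangle$ is constant. Your explicit attention to the descent of the difference of connections to $M$, to connectedness, and to the singular locus of $\mu$ is, if anything, a cleaner presentation of the same argument.
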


\begin{proof}
By definition, the connection $1$-forms $\Theta_i$ satisfy $\mu=\Theta_i(X)$ \cite{Kostant70}, where $\mu$ is a moment map of the toric action and $X$ is a vector field tangent to the polarization given by $\mu$.

Take $\alpha$ such that $\pi^*\alpha=\Theta_1-\Theta_2$. By Lemma \ref{lem:invariantform}, $\Theta_1$ and $\Theta_2$ can be chosen invariant so that $\mathcal{L}_X \alpha = 0$, since $\alpha$ is invariant under $X$. Then, by Cartan's magic formula, we have that $d\alpha(X) = di_X \alpha = \mathcal{L}_X \alpha - i_X d\alpha$.
 
If we have two invariant connection $1$-forms $\Theta_1$ and $\Theta_2$, we know that their difference $\Theta_1-\Theta_2=\pi^*\alpha$ is a constant. Then $\pi^*d\alpha=d\Theta_1-d\Theta_2=\omega-\omega=0$ and $d\alpha=0$ is zero.

Finally, if $d(\alpha(X)) = 0$, $\alpha(X)$ is a constant.
\end{proof}

\begin{remark}
In view of Proposition \ref{prop:differenceconnectionforms}, $\alpha(X_i)$ is a constant for any component $X_i$ of $X$. On the other hand, for any $\Theta_1,\Theta_2$, we have that $\Theta_1=\Theta_2+\pi^*\alpha$ and $\mu_1=\mu_2 +\pi^*\alpha(X)$. Then, fixing $\alpha(X_i)$ is equivalent to make a choice of the connection $1$-form and to fix the constant in the moment map.

In other words, the choice of a constant, which has to be made at some point in the process of quantization, can be made either by selecting a specific connection $1$-form or, equivalently, by setting at $0$ the coordinates of the moment map at a particular Bohr-Sommerfeld leaf.
\end{remark}

\subsection{The Bohr-Sommerfeld set in the image of the moment map}

Observe that any toric action on a symplectic manifold defines an integrable system. Then, in a symplectic toric manifold $(M,\omega,\mu)$, we can apply Theorem \ref{thm:BSintegerpoints} to identify the Bohr-Sommerfeld leaves of $M$ with the integer points in the image of the moment map.

We want to extend the correspondence between Bohr-Sommerfeld leaves and integer points in the image of the moment map to $b$-symplectic toric manifolds. For this reason, we prove first Theorem \ref{th:BSequalIntegerpoints}, which is a particular case of Theorem \ref{thm:BSintegerpoints} for symplectic toric manifolds. Then, we obtain Corollary \ref{cor:b-BSequalIntegerpoints}, the $b$-symplectic toric version of Theorem \ref{th:BSequalIntegerpoints}.

\begin{theorem}
Let $(M,\omega,\mu)$ be an integral symplectic toric manifold. Then, the Bohr-Sommerfeld set coincides with the integer points in the image of $\mu$.
\label{th:BSequalIntegerpoints}
\end{theorem}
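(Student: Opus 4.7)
The strategy is to combine Theorem \ref{thm:BSintegerpoints} with the observation that, on a toric manifold, the components of the moment map \emph{are} a global system of action coordinates (on the open dense set of regular values), so the theorem already gives the Bohr-Sommerfeld condition up to an overall additive constant. The job is then to show that the choice of connection can be arranged so that this constant vanishes, and to handle the singular fibers sitting over the boundary of the Delzant polytope.

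First, by Lemma \ref{lem:invariantform} I may fix a $T^n$-invariant connection $1$-form $\Theta$ on $\mathbb{L}$ with $d\Theta=\omega$. Let $X_1,\dots,X_n$ be the fundamental vector fields of the lattice generators of $T^n$, each with period $2\pi$. Invariance of $\Theta$ together with Cartan's formula gives $d(\iota_{X_i}\Theta)=-\iota_{X_i}\omega=-d\mu_i$, so that $\iota_{X_i}\Theta=-\mu_i+c_i$ for constants $c_i\in\mathbb{R}$ (this is essentially the content of Proposition \ref{prop:differenceconnectionforms}). Over a regular value $b\in\mu(M)$, the fiber $\mu^{-1}(b)$ is a Lagrangian torus whose first homology is generated by the orbits of $X_1,\dots,X_n$, and the holonomy of $\Theta$ around the $i$-th such orbit is $\exp\bigl(2\pi i(-\mu_i(b)+c_i)\bigr)$. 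The Bohr-Sommerfeld condition then reads $\mu_i(b)-c_i\in\mathbb{Z}$ for $i=1,\dots,n$.

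Next, following the remark after Proposition \ref{prop:differenceconnectionforms}, shifting $\Theta$ by an invariant closed $1$-form $\pi^*\alpha$ shifts each $c_i$ by $\alpha(X_i)$, while simultaneously shifting $\mu$ by the same constant vector. Equivalently, one is free to normalize the moment map so that some chosen Bohr-Sommerfeld leaf sits over a lattice point; once this is done, the condition becomes exactly $\mu(b)\in\mathbb{Z}^n$. Integrality of $\omega$ guarantees that such a normalization is consistent (the periods of $\Theta$ over any closed loop in a regular fiber lie in $2\pi\mathbb{Z}$ modulo the same global shift), and this matches the action coordinates furnished by Theorem \ref{thm:BSintegerpoints}, so the Bohr-Sommerfeld set in the image of $\mu$ is precisely the lattice $\mathbb{Z}^n\cap\mu(M)$.

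The one subtlety I expect will be the fibers lying over the boundary of the Delzant polytope, which are not Lagrangian tori but lower-dimensional isotropic tori where some of the $X_i$ vanish. Here I would invoke Remark \ref{rem:ellipticpoints}: in normal-form coordinates the singular components of $\mu$ are elliptic, so the collapsed circle directions contribute no nontrivial holonomy constraint, and the Bohr-Sommerfeld condition reduces to the integrality of the non-collapsed components of $\mu$. Because the Delzant polytope is cut out by integer affine inequalities, an integer point of $\mu(M)$ on a face of codimension $k$ automatically has integer coordinates in the transverse directions as well, so the bookkeeping at the boundary is consistent with the count in the interior. Once this is verified facet by facet, the theorem follows; the main obstacle in the write-up is being careful about the normalization/sign conventions relating $\Theta$, $\mu$, and the $2\pi$-factors in the holonomy, so that the correspondence is genuinely an equality of sets rather than only up to a translation.
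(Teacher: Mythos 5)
Your argument is essentially the paper's: both compute the holonomy of the prequantum connection around the orbit circles of the torus action, find it equal to $e^{2\pi i\mu_i}$ (up to the additive constant that the paper fixes through its choice of the Kostant connection and the discussion in Section \ref{sec:dependenceconnection}), and conclude that triviality of $\mathbb{L}$ restricted to a leaf forces $\mu\in\mathbb{Z}^n$. Your additional care with the normalization constant and with the collapsed circle directions over the boundary of the Delzant polytope fills in details the paper delegates to its surrounding remarks, but the route is the same.
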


\begin{proof}
Suppose $(M,\omega,\mu)$ is an integral symplectic toric manifold of a dimension $2n$. We will compute the Bohr-Sommerfeld leaves of $M$ with respect to the real Lagrangian polarization given by $\mu$ and see that each one of them is mapped to an integer point of $\Ima(\mu)$.

The Bohr-Sommerfeld leaves are the leafwise flat sections, i.e. the sections $s$ satisfying $\nabla_X s=0$, which are defined on an entire leaf.

We use the following formula from \cite{Kostant70} and \cite{DuistermaatGuilleminMeinrenkenWu95}: 
\begin{equation}
    D_X s = \nabla_X s + i\langle\mu,X\rangle s,
    \label{eq:flatconnection}
\end{equation}
with $X\in\mathfrak{t}$.

Since $M$ is a toric manifold, we can make use of the natural angle coordinates $\phi_1,\dots,\phi_n$ to obtain from (\ref{eq:flatconnection}) the following equation for each $1\leq i\leq n$:
\begin{equation}
    D_{\frac{\partial}{\partial \phi_i}} s = \nabla_{\frac{\partial}{\partial \phi_i}} s + i\langle\mu_i,\phi_i\rangle s.
    \label{eq:flatconnection2}
\end{equation}
At the leafwise flat sections, we have that $\nabla_{\frac{\partial}{\partial \phi_i}} s=0$ for each $i$, implying that 
\begin{equation}
    D_{\frac{\partial}{\partial \phi_i}} s =  i\langle\mu_i,\phi_i\rangle s.
    \label{eq:flatconnection3}
\end{equation}
We are looking for leaves of the foliation by $\mu$ which admit a flat section. Then, we can integrate (\ref{eq:flatconnection3}) with respect to $\phi$ from $0$ to $2\pi$ keeping $\mu_i$ constant, since it is constant along any leaf. We obtain 
\begin{equation}
    s(\phi_i=2\pi)=  e^{i\mu_i2\pi} \cdot s(\phi_i=0).
    \label{eq:flatconnection4}
\end{equation}
In order to be a well-defined section globally (on the entire leaf), $s(\phi_i=2\pi)$ has to equal $s(\phi_i=0)$. This condition is met if and only if $\mu_i\in\mathbb{Z}$. Then, the values of $\mu(M)$ at which there is a non-trivial leafwise flat section are in $\mathbb{Z}^n$.
\end{proof}

\begin{corollary}
Let $(^bM^{2n},Z,\omega,\mu)$ be an integral $b$-symplectic toric manifold. Then, the Bohr-Sommerfeld set coincides with the integer points in the image of $\mu$.
\label{cor:b-BSequalIntegerpoints}
\end{corollary}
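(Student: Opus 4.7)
The plan is to reduce the $b$-symplectic case to the symplectic case already established in Theorem \ref{th:BSequalIntegerpoints}, exploiting the fact that the connection $\nabla$ is only defined on $M\setminus Z$ (as recalled in the preliminaries on formal geometric quantization of $b$-symplectic manifolds), so that the entire Bohr-Sommerfeld analysis takes place on the complement of the critical hypersurface.

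First, I would restrict everything to $M\setminus Z$. On each connected component $(M\setminus Z)_i$, the $b$-symplectic form $\omega$ restricts to a genuine symplectic form, and the toric action restricts to a Hamiltonian $T^n$-action with moment map $\mu|_{(M\setminus Z)_i}$. The line bundle $(\mathbb{L},\nabla)$ has curvature $\omega|_{M\setminus Z}$ by hypothesis. Consequently, each component is an (in general non-compact) symplectic toric manifold, and the argument carried out in the proof of Theorem \ref{th:BSequalIntegerpoints}, namely introducing angle coordinates $\phi_1,\dots,\phi_n$, applying $D_{\partial/\partial\phi_i}s=\nabla_{\partial/\partial\phi_i}s+i\langle\mu_i,\phi_i\rangle s$ to a leafwise flat section, and integrating from $0$ to $2\pi$, is purely local along each leaf and therefore applies verbatim. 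This yields that a leaf in $(M\setminus Z)_i$ supports a non-trivial leafwise flat section if and only if each component of $\mu$ takes an integer value on it.

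Second, I would verify that the leaves along $Z$ do not produce additional Bohr-Sommerfeld points in the image of $\mu$. Using the local normal form of Proposition \ref{prop:localmomentmap}, in a neighbourhood $L\times S^1\times (-\varepsilon,\varepsilon)$ of $Z$ the moment map reads $(\ell,\rho,t)\mapsto(\mu_L(\ell),c\log|t|)$, so the component conjugate to the modular circle is $c\log|t|$, which tends to $\pm\infty$ as $t\to 0$. Hence no leaf contained in $Z$ sits over an integer point of $\mathbb{R}^n$, and the image $\mu(M)\cap\mathbb{R}^n$ coincides with $\mu(M\setminus Z)$. Combining this with the conclusion of the first step gives the desired identification between the Bohr-Sommerfeld set and the integer points in the image of $\mu$.

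The main subtlety, and the step I would examine most carefully, is the behaviour of the integration argument (\ref{eq:flatconnection3})--(\ref{eq:flatconnection4}) on leaves that approach $Z$. On such leaves the integrand $\langle\mu_i,\phi_i\rangle$ remains finite for each individual leaf (which sits at a fixed, finite value of the $b$-moment map), so the holonomy computation is unaffected; the unboundedness of $\mu$ only manifests itself globally, as a family of Bohr-Sommerfeld leaves accumulating toward $Z$ with integer values of the modular component tending to $\pm\infty$. This is precisely the origin of the infinite-dimensional naive quantization that motivates the introduction of the Bohr-Sommerfeld quantization with sign in Section~\ref{sec:BSquantwithsign}.
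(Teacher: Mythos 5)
Your proposal is correct and follows essentially the same route as the paper: restrict to $M\setminus Z$, apply Theorem \ref{th:BSequalIntegerpoints} there, and use the local normal form of Proposition \ref{prop:localmomentmap} to see that the modular component $c\log|t|$ diverges as $t\to 0$, so $Z$ contributes no Bohr--Sommerfeld points. Your extra remark that the holonomy computation on each individual leaf is unaffected by the unboundedness of $\mu$ is a slightly more careful justification of the step the paper states more briefly, but the argument is the same.
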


\begin{proof}
By Proposition \ref{prop:localmomentmap}, in a neighbourhood of $L \times S^1 \times (-\varepsilon, \varepsilon) \cong U \subseteq M$ of $Z$, where $L$ is a leaf of its symplectic foliation the $T^n$-action on $U\setminus Z$ has moment map
\begin{equation*}
\mu_{U\setminus Z}: L \times S^1 \times \left((-\varepsilon, \varepsilon)\setminus\{0\}\right) \rightarrow \mathfrak{t}^* \cong \mathfrak{t}_{Z}^* \times \mathbb{R},\text{\,\,}
(\ell, \rho, t) \mapsto (\mu_{L}(\ell), c\log|t|),
\end{equation*}
where $c$ is the modular period of $Z$ and the map $\mu_{L}: L \rightarrow \mathfrak{t}_{Z}^*$ is a moment map for the $T_{Z}^{n-1}$-action on $L$.

As $M\setminus Z$ is an integral symplectic toric manifold, we can apply Theorem \ref{th:BSequalIntegerpoints}. Then, Bohr-Sommerfeld leaves of $M\setminus Z$ correspond to the points $(\mu_{L}(\ell), c\log|t|)$ such that $\mu_{L}(\ell)\in\mathbb{Z}^{n-1}$ and $c\log|t|\in \mathbb{Z}$.

The discreteness of the integrality condition on $c\log|t|$ implies that there are no Bohr-Sommerfeld leaves in the limit when $t$ tends to $0$. Thus, there are no Bohr-Sommerfeld leaves on $Z$ and this completes the proof of the Corollary.
\end{proof}

\section{Bohr-Sommerfeld quantization with sign for $b$-symplectic toric manifolds}
\label{sec:BSquantwithsign}
In this section we formalize the notion of counting Bohr-Sommerfeld leaves as a virtual vector spaces. We will redefine the Bohr-Sommerfeld quantization of Section \ref{section:preliminaries} in order to have it defines as a sum of $T$-modules. We will start doing it for the symplectic toric case, in which the quantization is already finite. Then, using the orientation of the manifold, we will define a quantization with sign which, together with the $T$-modules approach, will allow us to subtract virtual vector spaces.

To prove the equivalence between the Bohr-Sommerfeld quantization and the formal geometric quantization of an integral $b$-symplectic toric manifold, we need to introduce the \textit{Bohr-Sommerfeld quantization with sign for $b$-symplectic toric manifolds}. We define it for the $2$-dimensional $b$-symplectic sphere equipped with the rotation action. Then, we extend the definition to a general $b$-symplectic toric manifold.

Before the definition of Bohr-Sommerfeld quantization with sign, we will illustrate the need for it with a basic example.

\subsection{Bohr-Sommerfeld quantization via $T$-modules}

We start defining a \textit{ Bohr-Sommerfeld quantization} for symplectic toric manifolds. Assume that $(M,\omega,\mu)$ is a symplectic toric manifold and suppose $B_{BS}$ is the Bohr-Sommerfeld set of the polarization by the action of $T^n$.

To each $b\in B_{BS}$ regular value of $\mu$ (i.e., to each $b\in B_{BS}$ in the interior of the moment polytope $\Delta=\mu(M)$) we can associate a representation $\mathbb{C}(b)$ of $T^n$, where $b$ is the weight obtained by taking the quotient with the lifted action given by $\mu$. By the following Proposition, this representation $\mathbb{C}(b)$ is equal to the representation $\mathbb{C}\langle s_b\rangle)$ that appears in Equation \ref{eq:BSquantclassical}.

\begin{proposition}
For any $b=\mu(x)\in B_{BS}$ in the interior of $\Delta=\mu(M)$, $\mathbb{C}\langle s_b\rangle = \mathbb{C}(b)$.
\label{prop:leafequalsweight}
\end{proposition}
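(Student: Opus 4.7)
The plan is to compare the $T^n$-module structures on both sides by computing the infinitesimal action on $s_b$ explicitly and recognizing it as the weight $b$. Since $b$ is a regular value in the interior of the Delzant polytope, the fiber $\mu^{-1}(b)$ is a principal $T^n$-orbit, i.e.\ a Lagrangian torus; the Bohr-Sommerfeld hypothesis says $\mathbb{L}|_{\mu^{-1}(b)}$ is trivial with a nowhere-vanishing leafwise flat section $s_b$, well-defined up to a scalar.

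First I would invoke Kostant's formula (\ref{eq:flatconnection}) for the derived action of $X\in\mathfrak{t}$ on sections of $\mathbb{L}$:
\begin{equation*}
D_X s = \nabla_X s + i\langle\mu,X\rangle s.
\end{equation*}
Applied to $s_b$, the leafwise flatness kills the first term ($\nabla_X s_b = 0$ for every $X\in\mathfrak{t}$, since the fundamental vector field of $X$ is tangent to the polarization on $\mu^{-1}(b)$), and $\mu \equiv b$ on the fiber, so
\begin{equation*}
D_X s_b = i\langle b,X\rangle\, s_b.
\end{equation*}

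Next I would exponentiate to recover the global lifted $T^n$-action on $s_b$. By Theorem \ref{th:BSequalIntegerpoints} we have $b\in\mathbb{Z}^n$, so $b$ defines a character $\chi_b\colon T^n\to S^1$ and the computation
\begin{equation*}
\exp(X)\cdot s_b = e^{i\langle b,X\rangle}\, s_b = \chi_b(\exp X)\, s_b
\end{equation*}
is well-defined on $T^n$. This is exactly the defining action of $T^n$ on the weight-$b$ representation $\mathbb{C}(b)$, so the map sending $s_b \mapsto 1\in\mathbb{C}(b)$ is an isomorphism of one-dimensional $T^n$-modules, which is the desired equality.

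The only real subtlety, and the point I would spend most care on, is ensuring consistency of conventions: the representation $\mathbb{C}(b)$ in the paragraph preceding the proposition is defined using \emph{the same} lifted $T^n$-action on $\mathbb{L}$ via which $D_X$ is computed, so that the weight read off from $\mathbb{C}(b)$ truly equals the quantity $\langle b,\cdot\rangle$ extracted from (\ref{eq:flatconnection}). In effect the proposition is the statement that the Kostant prequantization lift of the toric action assigns to a Bohr-Sommerfeld fiber $\mu^{-1}(b)$ precisely the character $\chi_b$, which is an immediate consequence of the identity $\Theta(X_{\mathrm{fund}}) = \langle\mu,X\rangle$ used in Proposition \ref{prop:differenceconnectionforms} together with the choice of constant fixed by the Bohr-Sommerfeld normalization.
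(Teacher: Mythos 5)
Your proposal is correct and follows essentially the same route as the paper's proof: both rest on Kostant's formula (\ref{eq:flatconnection}), the fact that $\mu^{-1}(b)$ is a principal $T^n$-orbit for $b$ in the interior of $\Delta$, and the integrality of $b$ to identify the lifted action on the flat section with the character of weight $b$. Your version simply makes explicit the weight computation ($\nabla_X s_b=0$ plus $\mu\equiv b$ on the fiber, then exponentiation) that the paper compresses into the citation of Kostant's identification $\mathbb{C}(b)=\mathbb{L}|_{\mu^{-1}(b)}/T^n$.
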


\begin{proof}
Suppose $b=\mu(x)\in B_{BS}$. Then, $s_b$ is a section of the line bundle over $M$ such that it satisfies Equation \ref{eq:flatconnection}, namely, $$D_X s_b = \nabla_X s_b + i\langle\mu,X\rangle s_b.$$

If $b$ is in the interior of $\Delta=\mu(M)$, it has a neighbourhood $U$ in which all points are regular values of $\mu$ or, equivalently, in the pre-image $\mu^{-1}(U)$ the toric action has no singularities. Then, on the pre-images $x=\mu^{-1}(b)$ of these values, the line bundle $\mathbb{L}$ restricts to a line bundle with connection $\mathbb{L} \to \mu^{-1}(b)$, where $\mu^{-1}(b)$ is a torus $T^n$. Then, the connection on this line bundle, since $\mu(x)$ is integral, is given by the line $\mathbb{C}(\mu(x))$, where $\mathbb{C}(\mu(x))$ is the quotient $\mathbb{L}_{\mu^-{1}(\mu(x))}/T^n$ \cite{Kostant70}.
\end{proof}

In the case of a point $b$ in the boundary of the moment polytope $\Delta$, the naive quotient does not give the correct result, so it has to be defined slightly differently, although it will formally appear to be the same. What we do is to  consider that all points in the boundary of $\Delta$ give weights of a one-dimensional representation $\mathbb{C}(w)$ of $\mathbb{R}^n$.

To each $b\in B_{BS}$ singular value of $\mu$ (i.e., to each $b\in B_{BS}$ in the boundary of the moment polytope $\mu(M)$), associate a representation  of $T^n$ in a more sophisticated way. A version of the shifting trick (\cite{Meinrenken96}, see also \cite{Weitsman01}) shows that, although we do not have a proper action of $T^n$ in $\mu^{-1}(b)$, if we take a succession ${a_i}$ in the interior $\mu(M)$ with limit at $b$, the quotient construction gives representations of $\mathbb{R}^n(a_i)$ approaching $\mathbb{C}(b)$. This is because the shifting trick can be applied to any regular value of $\mu$, in particular to all the $a_i\in \int \mu(M)$. Hence, at all these values the quotient can be identified with the quotient space at $0$. Finally, by continuity and since $\mathbb{C}(b)$ itself is a $T^n$-representation because $b$ is an integral value, it is well-defined.

We have associated each $b\in B_{BS}$ a representation $\mathbb{C}(b)$ that depends on whether $b$ is in the interior or in the boundary of $\Delta$ and we can define the \textit{Bohr-Sommerfeld quantization via $T$-modules} as follows.

\begin{definition}
The Bohr-Sommerfeld quantization via $T$-modules of an compact integral symplectic toric manifold is
\begin{equation}
    Q(M)=\bigoplus_{b\in B_{BS}}\mathbb{C}(b).
    \label{eq:equivBSquant}
\end{equation}
\label{def:equivariantBS}
\end{definition}

\begin{remark}
Observe that $Q(M)$ can also be defined directly as $=\bigoplus_{b \in \Delta \cap Z^n}\mathbb{C}(b)$. The sum $\bigoplus_{b \in \Delta \cap Z^n}\mathbb{C}(b)$ in Equation \ref{eq:equivBSquant} can be an infinite-dimensional module if $M$ is a non-compact toric manifold (in particular, if $M$ is a $b$-symplectic toric manifold) because $\Delta$ may be unbounded. Nevertheless, in all cases each weight has finite multiplicity (it is either zero or one).
\label{rem:quantcanbeinfinite}
\end{remark}

By the previous remark, if we apply Definition \ref{def:equivariantBS} to a $b$-symplectic toric manifold we will obtain an infinite-dimensional quantization space. For this reason, we introduce the \textit{Bohr-Sommerfeld quantization with sign} below. We will do it via $T$-modules again.

\subsection{The motivating example, the Bohr-Sommerfeld of the canonical $b$-sphere}

The simplest example of a $b$-symplectic toric manifold is the $b$-sphere with the singular hypersurface $Z$ being a single circle at the equator and endowed with the action of rotation around its vertical axis. We are going to see that its Bohr-Sommerfeld quantization via $T$-modules gives an infinite-dimensional space and that, on the other hand, its Bohr-Sommerfeld quantization with sign gives a finite-dimensional space. Besides, in Section \ref{section:equivalencegqfgq} we will prove that the latter quantization coincides with the formal geometric quantization in the general case.

Consider the $b$-symplectic sphere $(^bS^2,Z,\omega,\mu)$, with the hypersurface $Z$ on the the equator and $\mu$ the moment map of the rotation around the vertical axis. Away from the poles, take cylindrical polar coordinates $\{(h,\theta)\colon -1< h < 1, 0\leq \theta<2\pi\}$. $Z$ corresponds to $\{h=0\}$, the $b$-symplectic form on $^bS^2$ is $\omega=\frac{1}{h}dh\wedge d\theta=d\log{\vert h\vert }\wedge d\theta$ and the moment map writes as the $b$-function $\mu=-\log|h|$.

Let $\mathbb{L}$ be a complex line bundle on $^bS^2$ with connection $\nabla$ defined by $$\nabla_X \sigma =X(\sigma) - \sigma i \log{\vert h\vert }\,d\theta(X).$$ It has curvature $\omega$ and makes $\mathbb{L}$ into a pre-quantization line bundle.

Take the real polarization $P$ of $^bS^2$ given by $\mu=-\log|h|$, i.e., the polarization spanned by $\frac{\partial}{\partial\theta}$ at each point of $^bS^2$. The leaves of $P$ are either circles of the form $\{h_0\}\times S^1$, with $h_0\in (-1,0)\cup (0,1)$, or the two poles.

The leaf-wise flat sections, which satisfy $\nabla_X \sigma = 0$ for $X$ tangent to the polarization $P$, are of the form $$\sigma(h,\theta)=a(h)e^{i\log{\vert h\vert }\theta},$$ with $a(h)\in\mathbb{C}$ (see for instance \cite{Hamilton10} or \cite{MirMiranda21} for the explicit computations). The Bohr-Sommerfeld leaves on $^bS^2\setminus Z$ are the leaves of the foliation by $P$ that admit a non-trivial global leaf-wise flat section $\sigma$.

Along each leaf $\{h_0\}\times S^1$ of the foliation by $P$ of $^bS^2\setminus Z$, $h$ is fixed at $h_0$. Then, a leaf is Bohr-Sommerfeld if it admits a section $\sigma(h_0,\theta)$ such that $$\sigma(h_0,\theta)=\sigma(h_0,\theta+2\pi).$$

Therefore, $\{h_0\}\times S^1$ is a Bohr-Sommerfeld leaf if $1=e^{2\pi i\log{\vert h_0\vert}}$ or, equivalently, if $\log{\vert h_0\vert }\in\mathbb{Z}$. And the set of all Bohr-Sommerfeld leaves of the foliation by $P$ of $^bS^2\setminus Z$ is $$B_{BS}=\{\{e^{-m}\}\times S^1\subset \,^bS^2\setminus Z\colon m \in \mathbb{N}\}\,\bigcup\,\{\{-e^{-m}\}\times S^1\subset \,^bS^2\setminus Z\colon m \in \mathbb{N}\}.$$

The  Bohr-Sommerfeld quantization of $(^bS^2,Z,\omega,\mu)$ is, by Definition \ref{def:equivariantBS}, the following sum
$$Q(^bS^2) = \bigoplus_{b\in B_{BS}} \mathbb{C}(b) = \bigoplus_{2\mathbb{N}} \mathbb{C}(b),$$ which is an infinite-dimensional space.

Observe that the quantization is infinite-dimensional because there is an infinite number of Bohr-Sommerfeld leaves arbitrarily close to $Z$ both in the upper and the lower hemisphere. Explicitly, for any $a>0$, there is an infinite number of values of $h\in(0,a)$ and also of $h\in(-a,0)$ satisfying the condition $\log\vert h\vert \in \mathbb{Z}$ of a Bohr-Sommerfeld leaf.

\subsection{Bohr-Sommerfeld quantization with sign for the $b$-symplectic toric sphere}

In order to obtain a finite-dimensional quantization space for $(^bS^2,Z=\{h_z\}\times S^1,\omega,\mu)$, we will define a way of summing the quantization spaces corresponding to the Bohr-Sommerfeld leaves of $^bS^2$ that takes into account the orientation of the hemisphere to which each Bohr-Sommerfeld leaf belongs. The intention is to define the quantization space by "adding" the virtual vector space of Bohr-Sommerfeld leaves at the northern hemisphere and to "subtracting" the virtual vector space of Bohr-Sommerfeld at the southern hemisphere. In such a way, the final sum will be a finite-dimensional vector space.

This idea can be adapted formally by defining the \textit{Bohr-Sommerfeld quantization with sign} of $(^bS^2,Z=\{h_z\}\times S^1,\omega,\mu)$, with $-1<h_z<1$, as the sum of the virtual vector spaces $\mathbb{C}(b)$ associated to the Bohr-Sommerfeld leaves in $S^2_+=(h_z,1)\times S^1\subset S^2\setminus Z$ \textit{minus} the sum of as many copies of the virtual vector spaces $\mathbb{C}(b)$ associated to the Bohr-Sommerfeld leaves in $S^2_-=(-1,h_z)\times S^1\subset S^2\setminus Z$.

\begin{definition}
Let $B_{BS}$ be the Bohr-Sommerfeld set of $(^bS^2,Z=\{h_z\}\times S^1,\omega,\mu)$. For each $b\in B_{BS}$, define $\epsilon(b)$ as $\epsilon(b)=+1$ if $\mu^{-1}(b)$ is a Bohr-Sommerfeld leaf in the northern hemisphere $S^2_+$ and $\epsilon(b)=-1$ if $\mu^{-1}(b)$ is a Bohr-Sommerfeld leaf in the southern hemisphere $S^2_-$. We call $\epsilon(b)$ the \textit{sign of $b$}.
\label{def:signBS1}
\end{definition}

\begin{definition}[Bohr-Sommerfeld quantization with sign of $(^bS^2,Z=\{h_z\}\times S^1,\omega,\mu)$]
Let $B_{BS}$ be the Bohr-Sommerfeld set of $(^bS^2,Z=\{h_z\}\times S^1,\omega,\mu)$. The \textit{quantization with sign of $(^bS^2,Z=\{h_z\}\times S^1,\omega,\mu)$} is $$\tilde{Q}(^bS^2)=\bigoplus_{b\in B_{BS}} \epsilon(b) \mathbb{C}(b).$$
\label{def:geomquantwithsign1}
\end{definition}

\begin{lemma}
$\tilde{Q}(^bS^2)$ is a finite-dimensional vector space.
\label{lem:quantwithsignisfinite}
\end{lemma}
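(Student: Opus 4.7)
The plan is to exploit the sign function $\epsilon(b)$ in Definition \ref{def:geomquantwithsign1} to produce massive cancellation in $\tilde{Q}(^bS^2)$, leaving only a finite number of uncanceled weights. To describe the image of the moment map on each hemisphere: by Proposition \ref{prop:localmomentmap}, in a neighbourhood of $Z = \{h_z\}\times S^1$ the moment map $\mu$ has the local form $c\log|h-h_z|$ where $c\neq 0$ is the modular period; away from $Z$, $\mu$ is smooth with a unique critical point on each hemisphere, namely the corresponding pole. Hence $\mu$ is continuous and strictly monotonic in $h$ on each of $S^2_+ = (h_z,1)\times S^1$ and $S^2_- = (-1,h_z)\times S^1$, bounded at the pole and unbounded as $h\to h_z$. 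Consequently $I_+ = \mu(S^2_+)$ and $I_- = \mu(S^2_-)$ are half-infinite intervals with finite endpoints $a_+ = \mu(1)$ and $a_- = \mu(-1)$, and both share the \emph{same} unbounded end (both $+\infty$ if $c<0$, both $-\infty$ if $c>0$), since the local model depends on $|h-h_z|$ and not on the sign of $h-h_z$.

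Next, by Corollary \ref{cor:b-BSequalIntegerpoints} the Bohr-Sommerfeld set is $B_{BS} = (I_+ \cup I_-)\cap \mathbb{Z}$, and the strict monotonicity of $\mu$ on each hemisphere guarantees that every integer $m \in I_\pm$ is attained at a unique Bohr-Sommerfeld leaf in $S^2_\pm$, with sign $\epsilon = \pm 1$ by Definition \ref{def:signBS1}. Rewriting the signed sum of Definition \ref{def:geomquantwithsign1} gives, as virtual $T$-modules,
\begin{equation*}
\tilde{Q}(^bS^2) \;=\; \bigoplus_{m \in I_+ \cap \mathbb{Z}} \mathbb{C}(m) \;-\; \bigoplus_{m \in I_-\cap \mathbb{Z}} \mathbb{C}(m).
\end{equation*}
Because $I_+$ and $I_-$ share the same unbounded end, the symmetric difference $I_+ \triangle I_-$ is contained in the bounded interval with endpoints $a_+$ and $a_-$, and every integer $m\in I_+\cap I_-$ contributes $\mathbb{C}(m) - \mathbb{C}(m) = 0$. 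Only the finitely many integer weights in $(I_+ \triangle I_-)\cap \mathbb{Z}$ survive, and $\tilde{Q}(^bS^2)$ is therefore a finite direct sum of one-dimensional $T$-modules.

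The main subtlety, rather than a genuine obstacle, is the coincidence of the unbounded ends of $I_+$ and $I_-$: this is what makes the term-by-term cancellation occur on an infinite tail and turns the \emph{a priori} infinite-dimensional sum of Definition \ref{def:equivariantBS} applied to $^bS^2$ into a finite signed sum. The coincidence is a direct consequence of Proposition \ref{prop:localmomentmap}, since the local model of $\mu$ near $Z$ depends on $|h-h_z|$ and therefore diverges to the same infinity from either side of $Z$.
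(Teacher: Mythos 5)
Your proof is correct and follows essentially the same strategy as the paper's: the infinite tails of Bohr--Sommerfeld leaves accumulating at $Z$ cancel in pairs because the $\log|h-h_z|$ singularity sends both sides of $Z$ to the same infinity with opposite signs $\epsilon$, and only finitely many leaves survive outside a neighbourhood of $Z$. Your phrasing in terms of the symmetric difference $I_+\triangle I_-$ of the moment-map images is a slightly cleaner way to organize the cancellation (it happens at the level of weights, which is where the virtual $T$-module lives), whereas the paper pairs leaves $\{h_z+\delta\}\times S^1$ with $\{h_z-\delta\}\times S^1$ inside a symmetric neighbourhood of $Z$; the content is the same.
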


\begin{proof}
First, observe that $\bigoplus_{b\in B_{BS}} \epsilon(b) \mathbb{C}(b)$ is an infinite-dimensional module with finite multiplicities (which may be negative).

For each Bohr-Sommerfeld leaf of the form $\{h_z+\delta\}\times S^1$ in $S^2_+$, there is a Bohr-Sommerfeld leaf of the form $\{h_z-\delta\}\times S^1$ in $S^2_-$ for any $\delta>0$ small enough. Then, at any symmetric neighbourhood $U$ of $Z$ in $^bS^2\setminus Z$, the virtual module $\bigoplus_{b\in B_{BS}\cap \mu(U)} \epsilon(b) \mathbb{C}(b)$ is exactly $0$. 

On the other hand, there are only finitely many Bohr-Sommerfeld leaves in $^bS^2\setminus U$ and, therefore, $\bigoplus_{b\in B_{BS}\cap \mu(^bS^2\setminus U)}(b) \epsilon(b) \mathbb{C}$ is finite-dimensional. Hence, $\tilde{Q}(^bS^2)$ is a finite-dimensional vector space.
\end{proof}

The Bohr-Sommerfeld quantization with sign of the $b$-symplectic toric sphere with $Z$ being the circle at the equator is exactly $0$ due to its symmetry (see Figure \ref{fig:BSbsphere}). In general, $\tilde{Q}(^bS^2)$ is not $0$ because $Z$ does not need to be the at equator and there is not a one-to-one correspondence between Bohr-Sommerfeld leaves at each side of $Z$.
 
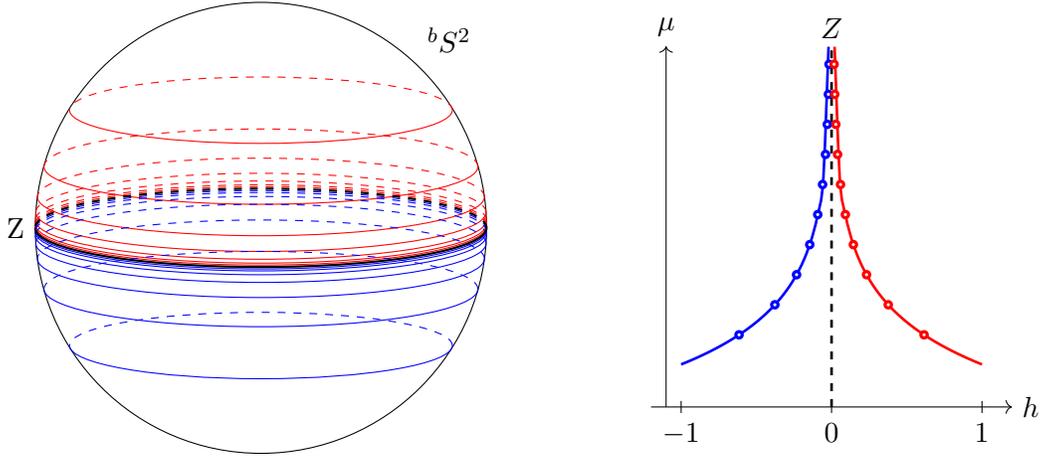
\begin{figure}[ht!]
\centering
\begin{tikzpicture}[scale=3]
    \def\phi{10};
    \draw (0, 0) circle (1) node[xshift=2.5cm,yshift=2.5cm] {$^bS^2$};
    \foreach \latitude in {0.5, 1, 2, 4, 8, 16, 32} {
    \pgfmathsetmacro\verticaloffset{cos(\phi)*sin(\latitude)};
    \pgfmathsetmacro\radius{cos(\latitude)};
    \tikzset{xyplane/.estyle={cm={1, 0, 0, cos(90 + \phi), (0, \verticaloffset)}}}
    \draw [xyplane, red] (\radius,0) arc (0:180:\radius);
    \draw [xyplane, dashed, red] (\radius,0) arc (360:180:\radius);
    }
    \foreach \latitude in {-0.5, -1, -2, -4, -8, -16, -32} {
    \pgfmathsetmacro\verticaloffset{cos(\phi)*sin(\latitude)};
    \pgfmathsetmacro\radius{cos(\latitude)};
    \tikzset{xyplane/.estyle={cm={1, 0, 0, cos(90 + \phi), (0, \verticaloffset)}}}
    \draw [xyplane, blue] (\radius,0) arc (0:180:\radius);
    \draw [xyplane, dashed, blue] (\radius,0) arc (360:180:\radius);
    }
    \tikzset{plane/.estyle={cm={1, 0, 0, cos(90 + \phi), (0, 0)}}}
    \draw [thick,plane] (1,0) arc (0:180:1);
    \draw [thick,plane,dashed] (1,0) arc (360:180:1) node[left] {Z};
\end{tikzpicture}\hspace{2cm}
\begin{tikzpicture}[scale=0.4,xscale=0.5,yscale=2]
  \draw[->] (-12, -3) -- (12, -3) node[right] {$h$};
  \draw (-10, -2.9) -- (-10, -3.1) node[below] {$-1$};
  \draw (0, -2.9) -- (0, -3.1) node[below] {$0$};
  \draw (10, -2.9) -- (10, -3.1) node[below] {$1$};
  \draw[line width = 1pt, domain=-10:-0.13, smooth, variable=\h, blue] plot ({\h}, {(-ln(abs(\h+0.1)))});
  \draw[line width = 1pt, black, dashed] (0, -3) -- (0, 3);
  \draw[line width = 1pt, domain=0.13:10, smooth, variable=\h, red] plot ({\h}, {(-ln(abs(\h-0.1))});
  \draw [white,fill=white] (-1,3) rectangle (1,3.5);
  \node at (0, 3.3) (Z1) {$Z$};
  \draw[->] (-11, -3) -- (-11, 3) node[above] {$\mu$};
\foreach \bs in {-1.8,-1.3,-0.8,-0.3,0.2,0.7,1.2,1.7,2.2,2.7} {
    \draw[very thick, blue,xscale=2,yscale=0.5,fill=white] ({(-exp(-(\bs))-0.1)*0.5},{\bs*2}) circle (3pt);
    }
\foreach \bs in {-1.8,-1.3,-0.8,-0.3,0.2,0.7,1.2,1.7,2.2,2.7} {
    \draw[very thick, red,xscale=2,yscale=0.5,fill=white] ({(exp(-(\bs))+0.1)*0.5},{\bs*2}) circle (3pt);
    }
\end{tikzpicture}
\caption{On the left, Bohr-Sommerfeld leaves on the northern hemisphere (in red) and the southern hemisphere (in blue) of $(^bS^2,Z=\{h_0=0\}\times S^1,\omega,\mu)$. On the right, the moment map $\mu=-\log|h|$ with dots indicating Bohr-Sommerfeld leaves.}
\label{fig:BSbsphere}
\end{figure}

\subsection{Bohr-Sommerfeld quantization with sign for $b$-symplectic toric surfaces}

We can naturally generalize the definition of Bohr-Sommerfeld quantization with sign of the $b$-symplectic toric sphere $(^bS^2,Z=\{h_z\}\times S^1,\omega,\mu)$ to any $b$-symplectic toric surface $(M^2,Z,\omega,\mu)$.

By Theorem \ref{thm:b-surfaces}, a $b$-symplectic toric surface $(M^2,Z,\omega,\mu)$ is equivariantly $b$-symplectomorphic to either $(S^2,Z)$ or $(T^2,Z)$, where $Z$ is a collection of latitude circles (in $T^2$, an even number of them) and $\mu$ is the standard rotation.

Since $Z$ defines an orientation in each connected component of $M^2\setminus Z$, we can associate a sign to each Bohr-Sommerfeld leaf depending on the orientation of the component to which it belongs.

\begin{definition}
Let $B_{BS}$ be the Bohr-Sommerfeld set of $(M^2,Z,\omega,\mu)$. For each $b\in B_{BS}$, define $\epsilon(b)$ as $\epsilon(b)=+1$ if $\mu^{-1}(b)$ belongs to a positively-oriented connected component and as $\epsilon(b)=-1$ if $\mu^{-1}(b)$ belongs to a negatively-oriented one. We call $\epsilon(b)$ the \textit{sign of $b$}.
\label{def:signBS2}
\end{definition}

\begin{definition}[Bohr-Sommerfeld quantization with sign of $(M^2,Z,\omega,\mu)$]
Let $B_{BS}$ be the Bohr-Sommerfeld set of $(M^2,Z,\omega,\mu)$. The \textit{quantization with sign of $(M^2,Z,\omega,\mu)$} is $$\tilde{Q}(M^2)=\bigoplus_{b\in B_{BS}} \epsilon(b) \mathbb{C}(b).$$
\label{def:geomquantwithsign2}
\end{definition}

\begin{lemma}
$\tilde{Q}(M^2)$ is a finite-dimensional vector space.
\label{lem:quantwithsignisfinite2}
\end{lemma}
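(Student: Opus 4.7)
The plan is to reduce to the $b$-sphere argument one component of $Z$ at a time. By Theorem \ref{thm:b-surfaces}, $M^2$ is equivariantly $b$-symplectomorphic to either $(S^2,Z)$ or $(T^2,Z)$ where $Z=Z_1\sqcup\cdots\sqcup Z_k$ is a finite collection of latitude circles, and the $T^1$-action is the standard rotation. Each $Z_i$ separates a tubular neighbourhood into two open annuli on which the orientation induced by the $b$-symplectic form flips sign, so the function $\epsilon$ of Definition \ref{def:signBS2} is locally constant on $M^2\setminus Z$ and takes opposite values on the two sides of each $Z_i$.

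First I would localize near $Z$. By Proposition \ref{prop:localmomentmap} applied in dimension two, for each $i$ there is a neighbourhood $V_i\cong S^1\times(-\varepsilon_i,\varepsilon_i)$ of $Z_i$ on which the moment map takes the form $(\rho,t)\mapsto c_i\log|t|$, with $c_i\neq 0$ the modular period of $Z_i$. In particular the map $t\mapsto -t$ is an orientation-reversing diffeomorphism of $V_i\setminus Z_i$ which intertwines the two sides of $Z_i$ and preserves the moment map (and hence the Bohr-Sommerfeld condition from Corollary \ref{cor:b-BSequalIntegerpoints}). Therefore, after possibly shrinking $\varepsilon_i$, the Bohr-Sommerfeld leaves of $M^2$ contained in $V_i\setminus Z_i$ come in pairs $\{t_0\}\times S^1$ and $\{-t_0\}\times S^1$ which carry opposite signs in Definition \ref{def:signBS2}, exactly as in the $b$-sphere calculation of Lemma \ref{lem:quantwithsignisfinite}. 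Hence the partial sum $\bigoplus_{b\in B_{BS}\cap \mu(V_i)}\epsilon(b)\mathbb{C}(b)$ vanishes identically as a virtual $T$-module, with every weight occurring with multiplicity zero.

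Next I would handle the complement. Let $U=V_1\cup\cdots\cup V_k$, a symmetric neighbourhood of $Z$. The set $M^2\setminus U$ is a finite disjoint union of compact pieces on which $\mu$ is a smooth, bounded function (none of the logarithmic blow-ups of Figure \ref{fig:bspheremomentmap} occur there). Thus $\mu(M^2\setminus U)$ is a bounded subset of $\mathfrak{t}^*\cong\mathbb{R}$, and by Corollary \ref{cor:b-BSequalIntegerpoints} it contains only finitely many integer points. Consequently $\bigoplus_{b\in B_{BS}\cap\mu(M^2\setminus U)}\epsilon(b)\mathbb{C}(b)$ is a genuine finite-dimensional virtual $T$-module.

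Combining the two pieces gives
\[
\tilde Q(M^2)=\bigoplus_{b\in B_{BS}\cap\mu(M^2\setminus U)}\epsilon(b)\mathbb{C}(b)\;\oplus\;\bigoplus_{i=1}^{k}\Bigl(\bigoplus_{b\in B_{BS}\cap\mu(V_i)}\epsilon(b)\mathbb{C}(b)\Bigr),
\]
and all but the first summand vanish, so $\tilde Q(M^2)$ is finite-dimensional. The only subtle step is the pairing argument near each $Z_i$: one must verify that the local model of Proposition \ref{prop:localmomentmap} really does produce a sign-reversing involution compatible with the orientation convention of Definition \ref{def:signBS2}, which is where the dimension-two hypothesis is used crucially since each component of $Z$ separates its tubular neighbourhood into exactly two pieces.
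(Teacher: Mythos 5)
Your proof is correct and follows essentially the same route as the paper's: choose a symmetric neighbourhood of $Z$ via Proposition \ref{prop:localmomentmap}, cancel the Bohr--Sommerfeld contributions in pairs on the two sides of each component of $Z$ as in Lemma \ref{lem:quantwithsignisfinite}, and observe that the compact complement contributes only finitely many leaves. You merely spell out component-by-component details (the involution $t\mapsto -t$ and the boundedness of $\mu$ off $U$) that the paper leaves implicit.
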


\begin{proof}
Take a symmetric neighbourhood $U\subset M^2\setminus Z$ of $Z$, which always exists by Proposition \ref{prop:localmomentmap}. By the same argument in the proof of Lemma \ref{lem:quantwithsignisfinite}, the sum $\bigoplus_{b\in B_{BS}} \epsilon(b) \mathbb{C}(b)$ is $0$ at $U\setminus Z$ and it is a finite-dimensional vector space in $M^2\setminus U$. Hence, $\tilde{Q}(M^2)$ is finite-dimensional.
\end{proof}

This definition allows to obtain, from a symplectic toric surface that has an infinite number of Bohr-Sommerfeld, a finite-dimensional quantization space (see Figures \ref{fig:bsphere6} and \ref{fig:momentmapbspheregeneraldetalls}).

\begin{figure}[ht!]
\begin{center}
\begin{tikzpicture}[scale=4]
    \def\phi{10};
    \draw (0, 0) circle (1);
    %black circles
    \foreach \latitude [count=\i] in {-53,-8,8,26,69} {
    \pgfmathsetmacro\verticaloffset{1*cos(\phi)*sin(\latitude)};
    \pgfmathsetmacro\radius{cos(\latitude)};
    \tikzset{xyplane/.estyle={cm={1, 0, 0, cos(90 + \phi), (0, \verticaloffset)}}}
    \draw [xyplane, black, thick] (\radius,0) arc (0:180:\radius);
    \draw [xyplane, dashed, black, thick] (\radius,0) arc (360:180:\radius) node[left] {$Z_{\i}$};
    }
    %blue circles
    \foreach \latitude in {-65, -57, -55,-54, -53.5, -7.5, -7, -6, -4, 0, 4, 6, 7, 7.5, 26.5, 27, 28, 30, 34, 44, 55, 63, 67, 68, 68.5} {
    \pgfmathsetmacro\verticaloffset{cos(\phi)*sin(\latitude)};
    \pgfmathsetmacro\radius{cos(\latitude)};
    \tikzset{xyplane/.estyle={cm={1, 0, 0, cos(90 + \phi), (0, \verticaloffset)}}}
    \draw [xyplane, blue] (\radius,0) arc (0:180:\radius);
    \draw [xyplane, dashed, blue] (\radius,0) arc (360:180:\radius);
    }
    %red circles
    \foreach \latitude in {-52.5, -52, -51, -49, -45, -37, -24, -16, -12, -10, -9, -8.5, 8.5, 9, 10, 12, 17, 22, 24, 25, 25.5, 69.5, 70, 71, 77} {
    \pgfmathsetmacro\verticaloffset{cos(\phi)*sin(\latitude)};
    \pgfmathsetmacro\radius{cos(\latitude)};
    \tikzset{xyplane/.estyle={cm={1, 0, 0, cos(90 + \phi), (0, \verticaloffset)}}}
    \draw [xyplane, red] (\radius,0) arc (0:180:\radius);
    \draw [xyplane, dashed, red] (\radius,0) arc (360:180:\radius);
    }
\end{tikzpicture}
\end{center}
\caption{A $b$-symplectic toric sphere with $Z$ made of 5 latitude circles.}
\label{fig:bsphere6}
\end{figure}
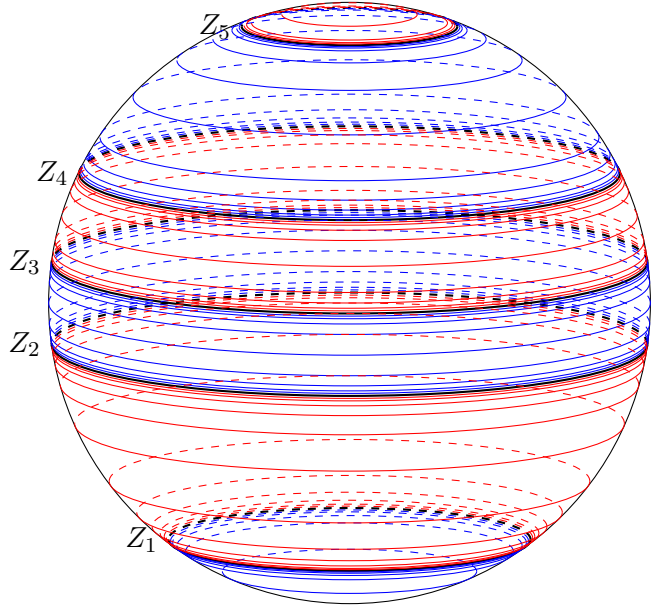

\begin{figure}[ht!]
\begin{center}
\begin{tikzpicture}[scale=0.7]
  \def\ha{0};\def\hb{3}\def\hc{8}\def\hd{10}\def\he{12}\def\hf{16}\def\hg{17};\def\dh{0.1};\def\ddh{0.01};
  \def\mua{-ln(abs((-\ha+\hb)/2-\dh))-ln(abs((\ha-\hb)/2+\dh))+0.0001};
  \draw[->] (\ha, -2) -- (\hg+1, -2) node[right] {$h$};
  \draw[line width = 1pt, domain=(\ha+\hb)/2:\hb-\dh-1.5*\ddh), smooth, variable=\h, blue] plot ({\h}, {(-ln(abs(\h-\ha-\dh))-ln(abs(\h-\hb+\dh)))});
  \foreach \bs in {0,1,2,3} {
    \draw[very thick,blue,fill=white] ({((\ha+\hb)/2+sqrt((\ha+\hb)^2/4-(\ha+\dh)*(\hb-\dh)-exp(-(\mua+\bs)))},{(\mua+\bs)}) circle (3pt);
    }
  \draw[line width = 1pt, black, dashed] (\hb, -2) -- (\hb,3);
  \draw[line width = 1pt, domain=\hb+\dh+\ddh:\hc-\dh-\ddh, smooth, variable=\h, red] plot ({\h}, {(-ln(abs(\h-\hb-\dh))-ln(abs(\h-\hc+\dh)))+0.3});
  \foreach \bs in {0,1,2,3} {
    \draw[very thick,red,fill=white] ({(\hb+\hc)/2-sqrt((\hb+\hc)^2/4-(\hb+\dh)*(\hc-\dh)-exp(-(\mua+\bs-0.3)))},{(\mua+\bs)}) circle (3pt);
    }
  \foreach \bs in {1,2,3} {
    \draw[very thick,red,fill=white] ({(\hb+\hc)/2+sqrt((\hb+\hc)^2/4-(\hb+\dh)*(\hc-\dh)-exp(-(\mua+\bs-0.3)))},{(\mua+\bs)}) circle (3pt);
    }
  \foreach \bs in {0} {
    \draw[very thick,red,fill=red] ({(\hb+\hc)/2+sqrt((\hb+\hc)^2/4-(\hb+\dh)*(\hc-\dh)-exp(-(\mua+\bs-0.3)))},{(\mua+\bs)}) circle (3pt);
    }
  \draw[line width = 1pt, black, dashed] (\hc, -2) -- (\hc,3);
  \draw[line width = 1pt, domain=\hc+\dh+2*\ddh:\hd-\dh-2*\ddh, smooth, variable=\h, blue] plot ({\h}, {(-ln(abs(\h-\hc-\dh))-ln(abs(\h-\hd+\dh)))});
  \foreach \bs in {1,2,3} {
    \draw[very thick,blue,fill=white] ({(\hc+\hd)/2-sqrt((\hc+\hd)^2/4-(\hc+\dh)*(\hd-\dh)-exp(-(\mua+\bs)))},{(\mua+\bs)}) circle (3pt);
    \draw[very thick,blue,fill=white] ({(\hc+\hd)/2+sqrt((\hc+\hd)^2/4-(\hc+\dh)*(\hd-\dh)-exp(-(\mua+\bs)))},{(\mua+\bs)}) circle (3pt);
    }
  \draw[line width = 1pt, black, dashed] (\hd, -2) -- (\hd,3);
  \draw[line width = 1pt, domain=\hd+\dh+\ddh:\he-\dh-\ddh, smooth, variable=\h, red] plot ({\h}, {(-ln(abs(\h-\hd-\dh))-ln(abs(\h-\he+\dh)))-1});
  \foreach \bs in {1,2,3} {
    \draw[very thick,red,fill=white] ({(\hd+\he)/2-sqrt((\hd+\he)^2/4-(\hd+\dh)*(\he-\dh)-exp(-(\mua+\bs+1)))},{(\mua+\bs)}) circle (3pt);
    }
  \foreach \bs in {0} {
    \draw[very thick,red,fill=red] ({(\hd+\he)/2-sqrt((\hd+\he)^2/4-(\hd+\dh)*(\he-\dh)-exp(-(\mua+\bs+1)))},{(\mua+\bs)}) circle (3pt);
    }
  \foreach \bs in {1,2,3} {
    \draw[very thick,red,fill=white] ({(\hd+\he)/2+sqrt((\hd+\he)^2/4-(\hd+\dh)*(\he-\dh)-exp(-(\mua+\bs+1)))},{(\mua+\bs)}) circle (3pt);
    }
  \foreach \bs in {0} {
    \draw[very thick,red,fill=red] ({(\hd+\he)/2+sqrt((\hd+\he)^2/4-(\hd+\dh)*(\he-\dh)-exp(-(\mua+\bs+1)))},{(\mua+\bs)}) circle (3pt);
    }
  \draw[line width = 1pt, black, dashed] (\he, -2) -- (\he,3);
  \draw[line width = 1pt, domain=\he+\dh+3*\ddh:\hf-\dh-3*\ddh, smooth, variable=\h, blue] plot ({\h}, {(-ln(abs(\h-\he-\dh))-ln(abs(\h-\hf+\dh)))+1});
  \foreach \bs in {1,2,3} {
    \draw[very thick,blue,fill=white] ({(\he+\hf)/2-sqrt((\he+\hf)^2/4-(\he+\dh)*(\hf-\dh)-exp(-(\mua+\bs-1)))},{(\mua+\bs)}) circle (3pt);
    }
  \foreach \bs in {2,3} {
    \draw[very thick,blue,fill=white] ({(\he+\hf)/2+sqrt((\he+\hf)^2/4-(\he+\dh)*(\hf-\dh)-exp(-(\mua+\bs-1)))},{(\mua+\bs)}) circle (3pt);
    }
  \foreach \bs in {1} {
    \draw[very thick,blue,fill=blue] ({(\he+\hf)/2+sqrt((\he+\hf)^2/4-(\he+\dh)*(\hf-\dh)-exp(-(\mua+\bs-1)))},{(\mua+\bs)}) circle (3pt);
    }
  \draw[line width = 1pt, black, dashed] (\hf, -2) -- (\hf,3);
  \draw[line width = 1pt, domain=\hf+\dh+\ddh:\hf.5, smooth, variable=\h, red] plot ({\h}, {(-ln(abs(\h-\hf-\dh))-ln(abs(\h-\hg+\dh)))-1.5});
  \foreach \bs in {2,3} {
    \draw[very thick,red,fill=white] ({(\hf+\hg)/2-sqrt((\hf+\hg)^2/4-(\hf+\dh)*(\hg-\dh)-exp(-(\mua+\bs+1)))},{(\mua+\bs)}) circle (3pt);
    }
  \draw [white,fill=white] (0.01,3) rectangle (\hg,3.5);
  \node at (3, 3.3)   (Z1) {$Z_1$};
  \node at (8, 3.3)   (Z2) {$Z_2$};
  \node at (\hd, 3.3)   (Z3) {$Z_3$};
  \node at (\he, 3.3)   (Z4) {$Z_4$};
  \node at (\hf, 3.3)   (Z5) {$Z_5$};
\draw (1.5, -2.0) -- (1.5, -2.1) node[below] {$-1$};
\draw (3, -2.0) -- (3, -2.1) node[below] {$h_{Z_1}$};
\draw (8, -2.0) -- (8, -2.1) node[below] {$h_{Z_2}$};
\draw (\hd, -2.0) -- (\hd, -2.1) node[below] {$h_{Z_3}$};
\draw (\he, -2.0) -- (\he, -2.1) node[below] {$h_{Z_4}$};
\draw (\hf, -2.0) -- (\hf, -2.1) node[below] {$h_{Z_5}$};
\draw (16.5, -2.0) -- (16.5, -2.1) node[below] {$1$};
\draw[->] (0, -2) -- (0, 3) node[above] {$\mu$};
\end{tikzpicture}\end{center}
\caption{The moment map of the $b$-sphere of Figure \ref{fig:bsphere6}. Blue dots are the Bohr-Sommerfeld leaves in positively oriented components of $S^2\setminus Z$. Red dots are the Bohr-Sommerfeld leaves in negatively oriented components of $S^2\setminus Z$. White-filled dots represent Bohr-Sommerfeld leaves at the neighbourhood of each $Z_i$ whose associated quantization spaces symmetrically cancel.}
\label{fig:momentmapbspheregeneraldetalls}
\end{figure}
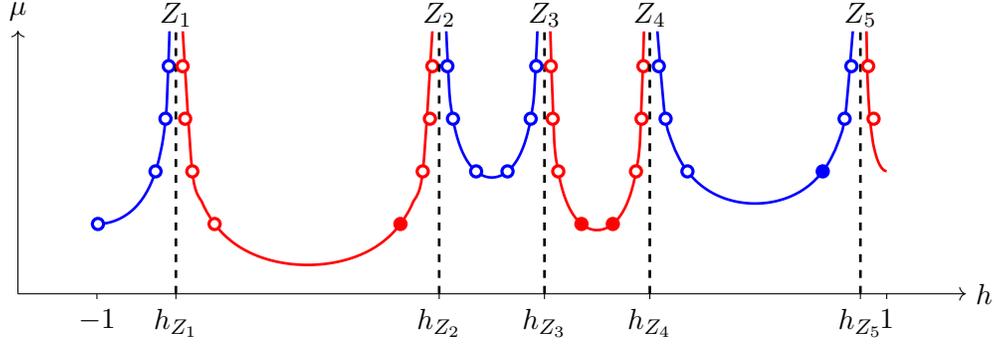

\subsection{Bohr-Sommerfeld quantization with sign for $b$-symplectic toric manifolds}

Proposition \ref{prop:bdelzant} states that a general $b$-symplectic toric manifold $(M^{2n},Z,\omega,\mu)$ decomposes either into the product of a $b$-symplectic toric $(^bT^2,Z_T,\omega_T,\mu_T)$ with a classic symplectic toric manifold, or it can be obtained from the product of a $b$-symplectic toric $(^bS^2,Z_S,\omega_S,\mu_S$ with a classic symplectic toric manifold by a sequence of symplectic cuts away from $Z$.

$Z$ defines an orientation in the connected components of $^bT^2\setminus Z_T$ and $^bS^2\setminus Z_S$. Then, we can give the definition of Bohr-Sommerfeld quantization with sign for $b$-symplectic manifolds based on the orientation of the connected components of $^bS^2$ or $^bT^2$ to which each Bohr-Sommerfeld leaf of $M$ projects.

\begin{definition}
Let $B_{BS}$ be the Bohr-Sommerfeld set of $(M^{2n},Z,\omega,\mu)$. Let $\pi$ be the canonical projection of $M$ to $^bS^2$ or $^bT^2$. For each $b\in B_{BS}$, define $\epsilon(b)$ as $\epsilon(b)=+1$ if $\pi\left(\mu^{-1}(b)\right)$ belongs to a positively-oriented connected component and as $\epsilon(b)=-1$ if $\pi\left(\mu^{-1}(b)\right)$ belongs to a negatively-oriented one. We call $\epsilon(b)$ the \textit{sign of $b$}.
\label{def:signBS3}
\end{definition}

\begin{definition}[Bohr-Sommerfeld quantization with sign of $(M^{2n},Z,\omega,\mu)$]
Let $B_{BS}$ be the Bohr-Sommerfeld set of $(M^{2n},Z,\omega,\mu)$. The \textit{quantization with sign of $(M^{2n},Z,\omega,\mu)$} is $$\tilde{Q}(M^{2n})=\bigoplus_{b\in B_{BS}} \epsilon(b) \mathbb{C}(b).$$
\label{def:geomquantwithsign3}
\end{definition}

\begin{lemma}
$\tilde{Q}(M^{2n})$ is a finite-dimensional vector space.
\label{lem:quantwithsignisfinite3}
\end{lemma}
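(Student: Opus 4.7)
The plan is to mirror the argument of Lemmas \ref{lem:quantwithsignisfinite} and \ref{lem:quantwithsignisfinite2} in higher dimension, by decomposing $M^{2n}$ into a symmetric tubular neighbourhood of $Z$ and its complement, and exploiting the structural description given by Proposition \ref{prop:bdelzant}.

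First, I would invoke Proposition \ref{prop:bdelzant} to realize $M^{2n}$ either as a product $\,^bT^2\times N$ or as a modification of $\,^bS^2\times N$ by symplectic cuts performed away from $Z$, where $N$ is a compact integral symplectic toric manifold. Since $N$ is compact and integral, its Bohr-Sommerfeld set $B_{BS}^N$ is finite. By Corollary \ref{cor:b-BSequalIntegerpoints}, the Bohr-Sommerfeld set $B_{BS}$ of $M$ consists of integral points $(b_{S/T},b_N)$ in the product image of the moment map $\mu=(\mu_{S/T},\mu_N)$, and by Definition \ref{def:signBS3} the sign $\epsilon(b)$ depends only on the first factor via the projection $\pi$ to the $b$-symplectic surface factor.

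Second, I would apply Proposition \ref{prop:localmomentmap} to extract a symmetric tubular neighbourhood $U\supset Z$ with local coordinates $(\ell,\rho,t)$ in which the component of $\mu$ transverse to $Z$ takes the form $c\log|t|$. Because this function is invariant under $t\mapsto -t$, every Bohr-Sommerfeld leaf at a point with $t>0$ has a mirror Bohr-Sommerfeld leaf at the corresponding point with $t<0$; the two lie in connected components of $M\setminus Z$ whose induced orientations (and hence whose images under $\pi$ in $\,^bS^2\setminus Z_S$ or $\,^bT^2\setminus Z_T$) are opposite. They therefore share the same weight $b$ but carry opposite signs $\epsilon(b)$, so they cancel in the virtual module, giving $\bigoplus_{b\in B_{BS}\cap \mu(U)}\epsilon(b)\mathbb{C}(b)=0$.

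Third, on the complement $M^{2n}\setminus U$ the $c\log|t|$ coordinate is bounded, and since $\mu_N(N)$ is compact (resp.\ $N$ contributes only finitely many Bohr-Sommerfeld leaves), only finitely many integer lattice points lie in $\mu(M\setminus U)$. By Corollary \ref{cor:b-BSequalIntegerpoints} the corresponding summand $\bigoplus_{b\in B_{BS}\cap \mu(M\setminus U)}\epsilon(b)\mathbb{C}(b)$ is finite-dimensional, and combining the two parts shows $\tilde Q(M^{2n})$ is finite-dimensional.

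The main obstacle I foresee is verifying rigorously that the sign assignment of Definition \ref{def:signBS3}, which is read off from the projection $\pi$ to the $b$-symplectic surface factor, does produce opposite signs on the two members of each mirror pair of Bohr-Sommerfeld leaves in $U$. This reduces to checking that $\pi$ intertwines the involution $t\mapsto -t$ on $M$ with the orientation-reversing passage from one side of $Z_S$ (or $Z_T$) to the other in the surface factor, which follows from the equivariant $b$-symplectomorphism produced by Proposition \ref{prop:bdelzant} together with the local normal form of Proposition \ref{prop:localmomentmap}. A secondary point to confirm is that the symplectic cuts in the $\,^bS^2$ case, being performed away from $Z$, preserve the symmetry of the tubular neighbourhood $U$, so that the cancellation argument is unaffected.
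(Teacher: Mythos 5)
Your proposal is correct and follows essentially the same route as the paper: the paper's proof simply takes a symmetric neighbourhood $U$ of $Z$ via Proposition \ref{prop:localmomentmap}, projects to the $b$-surface factor, and cites the cancellation-plus-compactness argument of Lemmas \ref{lem:quantwithsignisfinite} and \ref{lem:quantwithsignisfinite2}. You have merely written out in detail the steps the paper leaves implicit (the mirror-pair cancellation of equal weights with opposite signs inside $U$, and the finiteness of lattice points in the bounded image $\mu(M\setminus U)$), so no further comparison is needed.
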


\begin{proof}
Take a symmetric neighbourhood $U\subset M^{2n}\setminus Z$, which always exists by Proposition \ref{prop:localmomentmap}. Project $U$ by $\pi$ to $^bS^2$ or $^bT^2$ and use the same argument in the proof of Lemmas \ref{lem:quantwithsignisfinite} and \ref{lem:quantwithsignisfinite2} to obtain that $\tilde{Q}(M^{2n})$ is finite-dimensional.
\end{proof}

\begin{remark}
Observe that in the definition of the Bohr-Sommerfeld quantization with sign and, specifically, in the sum $\bigoplus_{b\in B_{BS}} \epsilon(b)\mathbb{C}(b)$, we are using the fact that we have a group $T$ acting with weights with finite multiplicity, thus each $\mathbb{C}(b)$ is finite dimensional. Then, the infinite sum $\bigoplus_{b\in B_{BS}} \epsilon(b)\mathbb{C}(b)$ is of finite dimension.

To illustrate the behaviour of this sum, consider the abelian case where all representations $\mathbb{C}(w)$ are one-dimensional and correspond to a weight $w$, so that $A = \oplus_w m_w \mathbb{C}(w)$ and $B = \oplus_w n_w \mathbb{C}(w)$, where $m_w$ and $n_w$ are integers. Then, one can consider $A-B$ to be $\oplus_w (m_w  - n_w) \mathbb{C}(w)$, which is a virtual $T$-module with finite multiplicities. The dimension of $A-B$ is $d=\sum_w (m_w - n_w)$ if this sum is finite and, then, one can talk about a finite-dimensional virtual module of dimension $d$.
\end{remark}

\section{The final count. Bohr-Sommerfeld quantization equals formal geometric quantization}
\label{section:equivalencegqfgq}

In this section we compare Bohr-Sommerfeld quantization with formal geometric quantization. We prove that they are equivalent because they both equal the count of integer points in the image of the moment map. We do it both for the symplectic case, in Theorem \ref{thm:GQcoincidesFGQ}, and for the $b$-symplectic case, in Theorem \ref{thm:b-GQcoincidesFGQ}.

\begin{theorem}
Let $(M^2,\omega,\mu)$ be a symplectic toric manifold. Then, the formal geometric quantization of $M$ coincides with the Bohr-Sommerfeld quantization.
\label{thm:GQcoincidesFGQ}
\end{theorem}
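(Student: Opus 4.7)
The plan is to show that both quantizations equal $\bigoplus_{b \in \Delta \cap \mathbb{Z}^n}\mathbb{C}(b)$, where $\Delta = \mu(M)$ is the Delzant polytope, so that they must agree. The right-hand side serves as the common bridge: it is literally the signed count of integer points in $\Delta$ (all signs $+1$ in the purely symplectic setting), packaged as a virtual $T$-module with finite multiplicities.

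For the Bohr-Sommerfeld side, I would invoke Theorem \ref{th:BSequalIntegerpoints} to identify $B_{BS}$ with $\Delta \cap \mathbb{Z}^n$, and then apply Definition \ref{def:equivariantBS} (together with the parenthetical identification in Remark \ref{rem:quantcanbeinfinite}) to write
\begin{equation*}
Q_{BS}(M) \;=\; \bigoplus_{b\in B_{BS}}\mathbb{C}(b) \;=\; \bigoplus_{b\in \Delta \cap \mathbb{Z}^n}\mathbb{C}(b).
\end{equation*}
Proposition \ref{prop:leafequalsweight} already guarantees that the summand at an interior integer point agrees with the representation obtained from the quotient construction, and the discussion following it fixes the boundary summands by a shifting-trick limit argument, so each weight in $\Delta \cap \mathbb{Z}^n$ contributes exactly one copy of $\mathbb{C}$.

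For the formal geometric quantization side, I would apply the quantization-commutes-with-reduction theorem (Theorem \ref{th:QR}) to get
\begin{equation*}
Q(M) \;=\; \bigoplus_{\alpha}Q(M//_\alpha T)\,\alpha,
\end{equation*}
the sum taken over weights $\alpha$ of $T$. In the symplectic toric case the reduced spaces are particularly simple: for $\alpha$ a regular value of $\mu$ inside $\Delta$, $M//_\alpha T$ is a single point, whose geometric quantization is $\mathbb{C}$; for $\alpha \notin \Delta$, the reduced space is empty, so $Q(M//_\alpha T) = 0$; for $\alpha$ on the boundary of $\Delta$ (a singular value), the shifting-trick definition recalled right after Theorem \ref{th:QR} still returns a one-dimensional space. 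Consequently
\begin{equation*}
Q(M) \;=\; \bigoplus_{\alpha \in \Delta \cap \mathbb{Z}^n}\mathbb{C}(\alpha),
\end{equation*}
which matches $Q_{BS}(M)$ term by term.

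The main obstacle I anticipate is the bookkeeping at the boundary of $\Delta$, where $\mu$ is singular: on the Bohr-Sommerfeld side one has to invoke the limiting identification of $\mathbb{C}(b)$ explained after Proposition \ref{prop:leafequalsweight}, and on the formal-geometric-quantization side one has to use the shifting trick from \cite{Meinrenken96}, and checking that both give the same one-dimensional weight-$\alpha$ piece requires comparing these two shifting constructions and making sure the identifications of $\mathbb{C}(\alpha)$ as a $T$-module coincide. Once the boundary case is pinned down, the interior case is essentially immediate from Proposition \ref{prop:leafequalsweight}, and the theorem follows by comparing the two decompositions weight by weight.
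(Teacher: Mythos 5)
Your proposal is correct and follows essentially the same route as the paper's proof: both sides reduce the formal geometric quantization to $\bigoplus_{\alpha\in\Delta\cap\mathbb{Z}^n}\mathbb{C}(\alpha)$ via Theorem \ref{th:QR} together with the observation that toric reduced spaces are points or empty, and identify the Bohr-Sommerfeld side with the same direct sum via Theorem \ref{th:BSequalIntegerpoints}. You are somewhat more explicit than the paper about reconciling the boundary/singular-value cases via the shifting trick on both sides, which is a useful clarification but does not change the substance of the argument.
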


\begin{proof}
We compute the formal geometric quantization of a symplectic toric manifold and then we count the Bohr-Sommerfeld leaves. We see that they are both the same and, in particular, they coincide with the count of the integer points in the image of the moment map (with sign) of the toric action.

In view of Theorem \ref{th:QR} (Quantization commutes with reduction), the formal geometric quantization of a symplectic toric manifold $(M^{2n},\omega,\,u)$ is given by
\begin{equation}
    Q(M) = \bigoplus_{\alpha\in\mathbb{Z}^n} Q(M//_\alpha T) \alpha.
\end{equation}
Notice that the sum is taken over all weights $\alpha$ of $T$.

Suppose $\mu:M\to\mathfrak{t}$ is the moment map of the toric action. Then, the reduced spaces $M//_\alpha T$ are either empty if $\alpha$ is not in $\mu(M)$ or a point if it is. Since the quantization of each single point is given by $\mathbb{C}$, we have that

\begin{equation}
    Q(M) = \bigoplus_{\alpha\in\mathbb{Z}^n\cap \mu(M)} \mathbb{C}(\alpha).
\end{equation}

Then, the formal geometric quantization of $M$ is given by as many copies of $\mathbb{C}$ as integer points in the image of the moment map.

On the other hand, by Theorem \ref{th:BSequalIntegerpoints}, the Bohr-Sommerfeld quantization is given by the count of Bohr-Sommerfeld leaves of $M$, which coincides with the integer points in the image of the moment map.
\end{proof}

Theorem \ref{thm:GQcoincidesFGQ} also holds when the manifold is $b$-symplectic.

\begin{theorem}
Let $(M^{2n},Z,\omega,\mu)$ be a $b$-symplectic toric manifold such that the image of the moment map is simply connected. Then, the formal geometric quantization of $M$ coincides with the Bohr-Sommerfeld quantization with sign.
\label{thm:b-GQcoincidesFGQ}
\end{theorem}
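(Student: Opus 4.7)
The plan is to identify both sides of the equality with a signed count of integer lattice points in $\mu(M)$, reducing the $b$-symplectic case to the symplectic computation of Theorem \ref{thm:GQcoincidesFGQ} applied componentwise on $M \setminus Z$.

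First, I would invoke the description recalled right after Definition \ref{eq:qreqb}, which expresses the formal geometric quantization as
$$Q(M) \;=\; Q(M \setminus Z) \;=\; \bigoplus_i \varepsilon_i \, Q\bigl((M \setminus Z)_i\bigr),$$
where the $(M \setminus Z)_i$ are the connected components of $M \setminus Z$ and each $\varepsilon_i \in \{\pm 1\}$ encodes the relative orientation inherited from $M$. By Proposition \ref{prop:localmomentmap}, the moment map $\mu$ blows up logarithmically on approach to $Z$, so each $(M \setminus Z)_i$ is a non-compact integral symplectic toric manifold with proper moment map, and Definition \ref{def:fgq} applies. The simple-connectedness of $\mu(M)$ ensures that Theorem \ref{thm:BSintegerpoints} may be applied globally on each component, so that integer lattice points in the image are unambiguously defined.

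Next, I would run the argument of Theorem \ref{thm:GQcoincidesFGQ} componentwise. Quantization commutes with reduction yields
$$Q\bigl((M\setminus Z)_i\bigr) \;=\; \bigoplus_{\alpha \in \mathbb{Z}^n \cap \mu((M\setminus Z)_i)} \mathbb{C}(\alpha),$$
since the reduction at a regular integer value is a single point whose quantization is $\mathbb{C}$. By Corollary \ref{cor:b-BSequalIntegerpoints}, these integer points are precisely the Bohr-Sommerfeld leaves of $(M\setminus Z)_i$. Assembling the pieces and grouping by Bohr-Sommerfeld leaf gives
$$Q(M) \;=\; \bigoplus_{b \in B_{BS}} \varepsilon(b)\, \mathbb{C}(b),$$
where $\varepsilon(b) := \varepsilon_i$ for the unique component $(M \setminus Z)_i$ containing $\mu^{-1}(b)$. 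The final identification $\varepsilon(b) = \epsilon(b)$, with $\epsilon(b)$ as in Definition \ref{def:signBS3}, follows from the product decomposition of Proposition \ref{prop:bdelzant}: the canonical projection $\pi$ onto the $b$-symplectic factor $\,^bS^2$ or $\,^bT^2$ preserves the orientation of each connected component of the complement of $Z$, so positively-oriented components in the base correspond exactly to the components with $\varepsilon_i = +1$ in $M$.

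The main obstacle is the infinite-dimensionality of the individual summands $Q((M\setminus Z)_i)$: each such component carries infinitely many Bohr-Sommerfeld leaves accumulating on $Z$, so the identity $Q(M) = \tilde{Q}(M)$ must be interpreted as an equality of virtual $T$-modules with finite weight multiplicities, and one has to verify that the signed sum actually collapses to a finite-dimensional module. This is precisely the content of Lemma \ref{lem:quantwithsignisfinite3}: inside a symmetric $T$-invariant tubular neighbourhood $U$ of $Z$ supplied by Proposition \ref{prop:localmomentmap}, the contributions from opposite sides of each component of $Z$ pair up and cancel by the symmetry of the local model, leaving only finitely many uncancelled weights coming from $M \setminus U$. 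Checking that this cancellation is compatible with the signs $\varepsilon_i$ dictated by the formal geometric quantization, and that the resulting equality holds as virtual $T$-modules rather than merely as formal character sums, is the technical heart of the argument.
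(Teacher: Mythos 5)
Your proposal is correct and reaches the same conclusion by essentially the same mechanism --- identifying both quantizations with the signed count of lattice points in $\mu(M)$ via Corollary \ref{cor:b-BSequalIntegerpoints} and then matching the signs --- but you enter through a different door. The paper works weight-by-weight: it takes $N$ to be the coadjoint orbit of integral weight $\alpha$ in the defining relation $(Q(M)\otimes Q(N))^{\alpha}=\varepsilon(\alpha)Q((M\times N)//_\alpha T)$ and invokes quantization-commutes-with-reduction to land directly on $Q(M)=\bigoplus_{\alpha}\varepsilon(\alpha)\mathbb{C}(\alpha)$. You instead decompose component-by-component, writing $Q(M)=\bigoplus_i \varepsilon_i Q((M\setminus Z)_i)$ and running the symplectic computation of Theorem \ref{thm:GQcoincidesFGQ} on each piece. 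The paper itself records that these two descriptions of the formal geometric quantization are equivalent (in the discussion following Definition \ref{eq:qreqb}), so the difference is organizational rather than substantive; if anything, your version is slightly more careful in that it indexes the sum by Bohr--Sommerfeld leaves rather than by lattice points, which matters when a single weight $\alpha$ lies in the image of several components (as for the symmetric $b$-sphere, where the contributions cancel). One remark on your final paragraph: the convergence/cancellation issue you defer as ``the technical heart'' is less delicate than you suggest, since the asserted equality $Q(M)=\tilde{Q}(M)$ is an identity of virtual $T$-modules verified weight by weight, each weight having finite (signed) multiplicity on both sides; the finite-dimensionality of the total is then exactly Lemma \ref{lem:quantwithsignisfinite3} on one side and the main theorem of \cite{GuilleminMirandaWeitsman18} on the other, and need not be re-proved inside this argument.
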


\begin{proof}
Recall from \cite{GuilleminMirandaWeitsman18b} that for any $b$-symplectic toric manifold $(M,Z,\omega,\mu)$, the quantization space $Q(M)$ is defined as the vector space such that the following equality holds
\begin{equation}
    (Q(M) \otimes Q(N))^{\alpha} = \varepsilon(\alpha)Q((M\times N)//_\alpha T)
    \label{eq:qmbeta}
\end{equation}
for any compact symplectic manifold $N$ and any weight $\alpha$ of $T$, where $T$ is the torus generating the action with moment map $\mu$ \cite{GuilleminMirandaWeitsman18}.

Since we are considering the case of a toric action, the reduced spaces are empty or just points. Observe that the right hand side of equation \ref{eq:qmbeta} is symplectic. Therefore we can apply the \textit{quantization commutes with reduction} scheme.

We can take $N$ to be the coadjoint orbit of this $T^n$-action of integral weight $\alpha$, which is in the integral lattice by definition. Applying equation \ref{eq:qmbeta} to equation \ref{eq:quantredsp} from Theorem \ref{th:QR}, we see that the formal geometric quantization of a $b$-symplectic manifold is given by the direct sum of the reduced spaces $Q(M//_\alpha T)$ at each $\alpha$.

The weights $\alpha$ are precisely the points in the integer lattice of the moment map of the toric action, meaning that $\{\alpha\}=\mathbb{Z}^n\cap \mu(M)$. On the other hand, Then, the quantization of $M$ is \begin{equation}
    Q(M) = \bigoplus_\alpha \varepsilon(\alpha) Q(M//_\alpha T) \alpha = \bigoplus_{\alpha\in\mathbb{Z}^n\cap \mu(M)} \varepsilon(\alpha) \mathbb{C}(\alpha).
    \label{eq:finalcountb1}
\end{equation}

On the other hand, by Corollary \ref{cor:b-BSequalIntegerpoints} the Bohr-Sommerfeld set of $M$ coincides with the lattice of integer points in the image of $\mu$. Therefore, by Definition \ref{def:geomquantwithsign3}, the Bohr-Sommerfeld quantization with sign of $M$ is
\begin{equation}
    \tilde{Q}(M)=\bigoplus_{b\in B_{BS}} \epsilon(b) \mathbb{C}(b) = \bigoplus_{b\in\mathbb{Z}^n\cap \mu(M)} \epsilon(b) \mathbb{C}(b).
    \label{eq:finalcountb2}
\end{equation}

Finally, for any point $p$ in the Bohr-Sommerfeld set $B_{BS}$, the sign $\epsilon(p)$ coincides with the sign $\varepsilon(p)$. By definition, both of them are $+1$ if the relative orientations of the symplectic forms on the component of $\mu^{-1}(p)$ of $M \setminus Z$ and the overall orientation of $M$ agree and $-1$ otherwise. Hence, $Q(M)=\tilde{Q}(M).$
\end{proof}

\bibliographystyle{alpha}
\bibliography{BSQuantofbSTM}

\end{document}